\documentclass[12pt]{article}

\usepackage[a4paper,twoside,
 inner=0.72in, outer=0.72in,
 top=1in, bottom=1in
]{geometry}

\usepackage{xcolor}
\usepackage{hyperref}

\hypersetup{
 colorlinks=true,
 linkcolor=blue,
 citecolor=blue,
 urlcolor=blue
}
 
\usepackage{graphicx}  
\usepackage{amsthm, amsfonts, amsmath, amssymb, color, bm}

\numberwithin{equation}{section}
 
\newtheorem{theorem}{Theorem}[section]
\newtheorem{lemma}[theorem]{Lemma}
\newtheorem{corollary}[theorem]{Corollary}
\newtheorem{proposition}[theorem]{Proposition}
\newtheorem{definition}[theorem]{Definition}
\newtheorem{remark}[theorem]{Remark}
\newtheorem{example}[theorem]{Example}

\def\<{\langle}
\def\>{\rangle}

\title{Einstein connection \\ of nonsymmetric pseudo-Riemannian manifold}

\author{
 Vladimir Rovenski\footnote{
Department of Mathematics,
Faculty of Natural Science, University of Haifa,
Mount Carmel, 3498838 Haifa, Israel.
E-mail: {vrovenski@univ.haifa.ac.il}
}
\ and
 Milan Zlatanovi\'c\footnote{
Department of Mathematics, Faculty of Science and Mathematics, University of Ni\v s, Vi\v segradska 33, 18000 Ni\v s, Serbia.
E-mail: {zlatmilan@yahoo.com}
}
}

\date{}

\begin{document}

\maketitle

\abstract{
A.~Einstein considered a linear connection $\nabla$ with torsion $T$ on a smooth manifold 
equipped with a nonsymmetric (0,2)-tensor $G=g+F$, where $g$ is a pseudo-Riemannian metric associated with gravity, and $F\ne0$ is a skew-symmetric tensor associated with electromagnetism, such that $(\nabla_X\,G)(Y,Z)=-G(T(X,Y),Z)$. 
In this paper, we  explicitly present the Einstein connection of a nonsymmetric pseudo-Riemannian manifold with non-degenerate $F$, 
satis\-fying the $f^2$-torsion condition
$T(f^2X,Y)=T(X,f^2Y)=f^2 T(X,Y)$, where $g(X,fY)=F(X,Y)$,
and show that in the almost Hermitian case, it
reduces to the 
M.~Prvanovi\'c's (1995) solution.
We~also discuss special Einstein connections, 
explicitly present the Einstein connection of almost contact metric manifolds satisfying the $f^2$-torsion condition,
and 
give example in terms of
weighted product of almost Hermitian~manifolds.

\vskip1mm
{\bf Keywords}: 
nonsymmetric pseudo-Riemannian manifold,
Einstein connection,
almost contact metric structure,
$f^2$-torsion condition.

\vskip1mm 
\textbf{MSC (2020)} 53B05; 53C15; 53C21.
}


\section{Introduction}

A. Einstein worked on various versions of the Unified Field Theory (Nonsymmetric Gravitational Theory -- NGT), see~\cite{Ein}. This theory was intended to unite the gravitation theory, to which General Relativity is related, and the theory of electromagnetism. Beginning in 1950, A. Einstein used a real nonsymmetric basic (0,2)-tensor $G$,
whose symmetric part $g$ is associated with gravity, and the skew-symmetric part $F$ is associated with electromagnetism.
In~his attempt to construct a Unified Field Theory,
(briefly NGT), A.~Einstein \cite{Ein} considered a smooth manifold $(M,G=g+F)$ with a linear connection $\nabla$ with torsion $T(X,Y)=\nabla_XY-\nabla_YX-[X,Y]$ satisfying the following condition:
\begin{equation}\label{metein0}
 (\nabla_X G)(Y,Z)=-G(T(X,Y),Z)\quad (X,Y,Z\in\mathfrak{X}_M).
\end{equation} 
called an {Einstein connection}.
The~idea of a nonsymmetric basic tensor was revisited by J.W.~Moffat \cite{Moffat-95}, allowing for richer gravi\-tational dynamics. Results by T. Jansen and T.~Prokopec \cite{JP-2007} point to potential improvements and challenges within NGT.
M.I.~Wanas et al. \cite{Wanas-24} studied path equations of a particle in Einstein's nonsymmetric geo\-metry. Recent approaches to modified gravity often rely on differen\-tial geometry, including torsion and non-metricity, as a natural extension of General Relativity.
Connections with {totally skew-symmetric torsion},
i.e., $T(X,Y,Z)$ is a~3-form, satisfying
$T(X,Y,Z)=-T(X,Z,Y)=-T(Y,X,Z)$,
are important due to the relations with mathematical physics, as they admit a coupling to spinor fields and lead to holonomy and rigidity phenomena. 
In this case, the dimension of the space of parallel
spinors coincides with the number of preserved supersymmetries. Such connections arise
naturally in supersymmetric string theories and non-linear $\sigma$-models, as well as in gravitational models \cite{Ham}.
S.~Ivanov and M.\,Lj.~Zlatanovi\'c (see~\cite{IZ1}) presented conditions for the existence and uniqueness of the Einstein connection with totally skew-symmetric torsion on a manifold $(M,G=g+F)$ and gave its explicit expression using an almost contact metric structure.
In~this case, the torsion is determined by
$T(X,Y,Z)=-\frac13\,dF(X,Y,Z)$,
and the diffe\-rence tensor $K:=\nabla-\nabla^g$ is given by 
\begin{equation}\label{newnbl-K}
 K_XY=\tfrac12\big\{T({f}X,Y)-T(X,{f}Y)+T(X,Y)\big\}.
\end{equation}
Totally skew-symmetry of $K$ implies totally skew-symmetry of $T$, but the converse does not hold in general. In this paper, we do not assume that the torsion is totally skew-symmetric. 

An important class of linear connections,
having applications in theory of probability and statistics as well as in information geometry,
are statistical connections, which are
torsion-free and the tensor 
$({\nabla} g)(X,Y,Z):=({\nabla}_X\,g)(Y,Z)$ is symmetric with respect to all entries, equiva\-lently, the cubic form $K(X,Y,Z):=g(K(X,Y),Z)$
(associated with the difference tensor (\ref{eq:contorsion-def}) of $\nabla$) is symmetric.
Unfortunately, we cannot directly apply the statistical structure to Einstein's nonsymmetric geometry: if the cubic form $K(X,Y,Z)$ is symmetric, then by \eqref{E-tordfnew} and~\eqref{E-tordfnew2}, $T=K=0$ is true; hence, $\nabla=\nabla^g$.

\smallskip

Weak metric structures, see \cite{rov-survey24}, gene\-ralize the almost Hermitian and almost contact metric structures, as well as K.~Yano's $f$-structure, and are well suited for studying $(M,G=g+F)$ in NGT with an arbitrary skew-symmetric tensor~$F$.
A \textit{weak almost Hermitian manifold} $(M,f,g)$ is a Riemanni\-an manifold $(M,g)$ of even dimension $\ge4$ endowed with non-singular skew-symmetric (1,1)-tensor $f$ and the 2-form 
$F(X,Y)=g(X,fY)$, see~\cite{rst-63}.
V.~Rovenski and M.~Zlatanovi\'c~\cite{rst-63,RZ-1} were the first to apply the weak almost Hermitian and 
weak almost contact structures to NGT of totally skew-symmetric torsion with $F(X,Y) = g(X,fY)$.
M.~Prvanovi\'c \cite{Prvanovic-95} obtained the explicit form of the Einstein connection of almost Hermitian manifold,
and using its torsion tensor, characterized the sixteen classes (by A.~Gray and L.M. Hervella \cite{gh-1980}) of almost Hermitian manifold. 

\smallskip

In this paper, we  explicitly present the Einstein connection of an NGT space with non-degenerate $F$, that is, a weak almost Hermitian manifold, satisfying some natural condition, called the $f^2$-torsion condition. This condition disappears for the classical case, hence our result generalizes the result by M.~Prvanovi\'c. 
We also  explicitly present the Einstein connection (and the special Einstein connection
defined by the property $K_XY=-K_YX$) of an almost contact metric manifold satisfying the $f^2$-torsion~condition. 

The paper has an Introduction and five sections.
In~Section~\ref{sec:geom} we review basics of NGT, give examples, prove several auxiliary statements, and discuss the relation between the torsion and contorsion of an Einstein connection emphasizing the role of $f^2$-torsion condition.
In~Section~\ref{sec:a.c.m} we find the Einstein connection of an almost contact metric mani\-fold with the $f^2$-torsion condition 
(Theorems~\ref{thm72} and \ref{thm72-s}).
In~Section~\ref{sec:w-Herm} we 
present the Einstein connection of nonsymmetric pseudo-Riemannian manifold with non-degenerate $F$ and the $f^2$-torsion condition (Theorem~\ref{T-8.2}), in particular, for weak almost Hermitian manifold (Corollary~\ref{C-5.2}). 
Long expressions (terms $\delta_i$) of Section~\ref{sec:w-Herm}
are given in Section~\ref{sec:app}.

\section{Einstein's nonsymmetric geometry}
\label{sec:geom}

A manifold $(M,G=g+F)$ equipped with an Einstein connection
$\nabla$ different from the Levi-Civita connection $\nabla^g$ is called an NGT space.
The basic (0,2)-tensor $G$ of an NGT space 
$(M,G=g+F)$ decomposes into two parts, the symmetric part $g$ (pseudo-Riemannian metric) and the skew-symmetric part $F$ (fundamental 2-form), that is,
\begin{equation*}
 g(X,Y)=\tfrac12\big[G(X,Y)+G(Y,X)\big], \quad F(X,Y)=\tfrac12\big[G(X,Y)-G(Y,X)\big].
\end{equation*}
We assume that the skew-symmetric part, $F$ is non-zero, may have arbitrary (not necessarily constant) rank, and in particular, may be non-degenerate (that is, of maximal rank).
Therefore, we obtain a (1,1)-tensor ${f}\ne0$ (of not necessarily constant rank) determined by
\begin{equation*}
 g(X, fY) = F(X,Y)\quad (X, Y \in \mathfrak{X}_M).
\end{equation*}
Since $F$ is skew-symmetric, the tensor ${f}$ is also skew-symmetric:
 $g({f}X, Y) = -g(X,{f}Y)$.
Note that $f=-A$, where the tensor $A$ is given in \cite{IZ1} by the equality $g(AX, Y) = F(X,Y)$.


The torsion of a linear connection $\nabla$ on $M$ is given by 
\[
 T(X,Y)=\nabla_XY-\nabla_YX-[X,Y].
\]
The covariant derivative an $(p,k)$-tensor $S$ for $p=0,1$ is an $(p,k+1)$-tensor $\nabla S$:
\begin{equation}\label{E-nablaP-S0}
 (\nabla S)(Y,X_{1},\ldots,X_{k}) =
 \nabla_{Y}(S(X_{1},\ldots,X_{k}))
 -\sum\nolimits_{\,i=1}^{k}S(X_{1},\ldots,
 \nabla_{Y}X_{i},\ldots,X_{k}) .
\end{equation}
Separating symmetric and skew-symmetric parts of \eqref{metein0}, 
we express the covariant derivatives $\nabla g$ and $\nabla F$ in terms of the (0,3)-torsion tensor $T(X,Y,Z)=g(T(X,Y),Z)$:
\begin{align}\label{ein6}
 2\,(\nabla_X\,g)(Y,Z)
&= T(Z,X,\,Y+fY) -T(X,Y,\,Z+fZ) ,\\
\label{ein5}
 2\,(\nabla_Z\,F)(X,Y)
&= - T(Z,X,\,Y+fY) - T(Y,Z,\,X+fX) .
\end{align} 
Recall the co-boundary formula for a 2-form $F$ 
(without the coefficient 3, unlike \cite{Blair-survey}):
\begin{align}\label{E-3.3a}
 dF(X,Y,Z) &= X(F(Y,Z)) + Y(F(Z,X)) + Z(F(X,Y)) \notag\\
 &\ -F([X,Y],Z) - F([Z,X],Y) - F([Y,Z],X).
\end{align}
The equality \eqref{E-3.3a} yields
\begin{align}\label{E-3.3}
 dF(X,Y,Z) =
 (\nabla^g_X\,F)(Y,Z)+(\nabla^g_Y\,F)(Z,X)+(\nabla^g_Z\,F)(X,Y).
\end{align}
The metricity condition \eqref{metein0} can be written  in the form
\begin{equation}\label{metricityeqq}
 (\nabla_X(g+F))(Y,Z) = -\,T(X,Y,Z)-T(X,Y,fZ).
\end{equation} 
Taking the cyclic sum in \eqref{metricityeqq} and applying the equality
\begin{equation*}
\begin{aligned}
dF(X,Y,Z)
&=
\,T(X,Y,fZ)
+\,T(Y,Z,fX)
+\,T(Z,X,fY)\\
&+(\nabla_X F)(Y,Z)
+(\nabla_Y F)(Z,X)
+(\nabla_Z F)(X,Y) ,
\end{aligned} 
\end{equation*} 
which follows from \eqref{ein5} and \eqref{E-3.3}, we obtain
\begin{equation}\label{ein2
3.2}
\begin{aligned}
&\quad (\nabla_X g)(Y,Z) +(\nabla_Y g)(Z,X) +(\nabla_Z g)(X,Y)  \\
&= -\,dF(X,Y,Z) -\,T(X,Y,Z) -\,T(Y,Z,X) -\,T(Z,X,Y).
\end{aligned}
\end{equation} Since the left hand side of (\ref{ein2
3.2}) is symmetric, while the right hand side is
skew-symmetric, we get the following two relations, see~\cite[Eq.~(3.3)]{IZ1}:
\begin{align*}
&(\nabla_X\,g)(Y,Z)
+(\nabla_Y\,g)(Z,X)
+(\nabla_Z\,g)(X,Y)
= 0, 
\end{align*}
and
\begin{align}\label{ein8}
 dF(X,Y,Z) = -T(X,Y,Z) -T(Y,Z,X) -T(Z,X,Y).
\end{align} 

The {\it difference tensor} $K$ of a linear connection $\nabla$ and the Levi-Civita connection $\nabla^g$ is
\begin{equation}\label{eq:contorsion-def}
 K_X Y := \nabla_X Y - \nabla^g_X Y.
\end{equation}
The Einstein connection $\nabla$
is represented in \cite[Eq.~(3.7)]{IZ1} using the torsion $T$ as
\begin{align}\label{genconein}
 g(\nabla_XY,Z) &=g(\nabla^g_XY,Z) 
 +\tfrac12\big\{T(X,Y,Z) -T(Z,X,{f}Y) +T(Y,Z,{f}X)\big\}.
\end{align}
By \eqref{genconein}, the difference (0,3)-tensor 
$K(X,Y,Z)=g(K_X Y, Z)$ and the torsion tensor of an Einstein connection $\nabla$ are expressed linearly in terms of each~other:
\begin{align}\label{E-tordfnew}
 2\,K(X,Y,Z)&= T(X,Y,Z) -T(Z,X,{f}Y)+T(Y,Z,{f}X),\\
\label{E-tordfnew2}
 T(X,Y,Z) &= K(X,Y,Z) - K(Y,X,Z) \quad\Longleftrightarrow\quad 
 T(X,Y) = K_X Y - K_Y X .
\end{align}

Using various presentations of tensors and \eqref{E-nablaP-S0}, we obtain
\begin{align}\label{important}
 (\nabla^g F)(Z,X,Y)= (\nabla^g_{Z}\,F)(X,Y)
 = g(X, (\nabla^g_Z\,f)Y).    
\end{align}

\begin{lemma}\label{L-nabla-g}
For an Einstein connection $\nabla$, the following conditions are equivalent: 

(i) $K(X,Y,Z)=-K(X,Z,Y)$, 

(ii) $\nabla g=0$,

(iii) $(\nabla_X F)(Y,Z)=-(\nabla_Y F)(X,Z)$,
or $g((\nabla_X f)Z, Y) = -g((\nabla_Y f)Z, X)$.
\end{lemma}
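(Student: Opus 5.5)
The equivalence (i)$\Leftrightarrow$(ii) is general and does not use the Einstein condition. Since $\nabla^g g=0$ and $\nabla=\nabla^g+K$, we get
\[
(\nabla_X g)(Y,Z)=-g(K_XY,Z)-g(Y,K_XZ)=-K(X,Y,Z)-K(X,Z,Y).
\]
Hence $\nabla g=0$ holds exactly when $K$ is skew-symmetric in its last two arguments. This settles (i)$\Leftrightarrow$(ii).

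For (ii)$\Leftrightarrow$(iii) I would work through \eqref{ein6} and \eqref{ein5}, which express $\nabla g$ and $\nabla F$ by the same torsion terms. Define
\[
P(X,Y,Z):=T(X,Y,\,Z+fZ),
\]
which is skew in $X,Y$. Then \eqref{ein6} reads
\[
2(\nabla_Xg)(Y,Z)=P(Z,X,Y)-P(X,Y,Z).
\]
Relabelling \eqref{ein5} gives
\[
2(\nabla_XF)(Y,Z)=-P(X,Y,Z)-P(Z,X,Y).
\]
Swapping $X$ and $Y$ in the second formula and using the skew-symmetry of $P$ in its first two slots gives
\[
2(\nabla_YF)(X,Z)=P(X,Y,Z)-P(Z,Y,X).
\]
Adding the two $\nabla F$ expressions yields
\[
2\big[(\nabla_XF)(Y,Z)+(\nabla_YF)(X,Z)\big]=-P(Z,X,Y)-P(Z,Y,X)
= -P(Z,X,Y)+P(Y,Z,X).
\]
By \eqref{ein6} with the arguments cycled to $(Y,Z,X)$, this equals $2(\nabla_Yg)(Z,X)$. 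So condition (iii) holds iff $(\nabla_Yg)(Z,X)=0$ for all $X,Y,Z$, which is exactly (ii). If the sign bookkeeping came out differently, I would instead use the identity $(\nabla_X g)+(\nabla_Y g)+(\nabla_Z g)=0$ (cyclic sum) derived above to absorb the mismatch.

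The second formulation of (iii) follows from $(\nabla_XF)(Y,Z)=g(Y,(\nabla_Xf)Z)$. This holds for $\nabla^g$ by \eqref{important}, and for a general $\nabla$ only once $\nabla g=0$. Since we are proving an equivalence, I would state the $f$-form under (ii), or else interpret it with $\nabla^g$ via \eqref{important}.

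The main obstacle is keeping the signs and argument permutations straight in the $P$ computation. The crucial step is recognising $-P(Z,X,Y)+P(Y,Z,X)$ as the right-hand side of \eqref{ein6}.
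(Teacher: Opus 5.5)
Your argument follows the paper's route. You get (i)$\Leftrightarrow$(ii) from $(\nabla_Xg)(Y,Z)=-K(X,Y,Z)-K(X,Z,Y)$, and (ii)$\Leftrightarrow$(iii) from the identity \eqref{ein7}, obtained by combining \eqref{ein6} and \eqref{ein5}.

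Your computation is correct up to $2[(\nabla_XF)(Y,Z)+(\nabla_YF)(X,Z)]=P(Y,Z,X)-P(Z,X,Y)$, but the next identification is mislabelled. Substituting $(X,Y,Z)\to(Y,Z,X)$ in \eqref{ein6} gives $2(\nabla_Yg)(Z,X)=P(X,Y,Z)-P(Y,Z,X)$. That is a different expression, and the two agree for all arguments only when $\nabla g=0$. The correct substitution is $(X,Y,Z)\to(Z,X,Y)$, which gives $2(\nabla_Zg)(X,Y)=P(Y,Z,X)-P(Z,X,Y)$, i.e.\ exactly \eqref{ein7}. Because (iii) is quantified over all $X,Y,Z$, your conclusion is unaffected. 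Your fallback would not have repaired the slip, though: the cyclic identity relates three components of $\nabla g$ and cannot turn $(\nabla_Yg)(Z,X)$ into $(\nabla_Zg)(X,Y)$. What is needed is just the right relabelling.

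On the $f$-form of (iii) you are more careful than the paper, which simply invokes \eqref{important}, an identity for $\nabla^g$. In general $(\nabla_XF)(Y,Z)=g(Y,(\nabla_Xf)Z)+(\nabla_Xg)(Y,fZ)$, so your caveat is justified. However, neither of your two options closes the equivalence:
\begin{itemize}
\item Reading the condition with $\nabla^g$ is not an equivalent reformulation, since by \eqref{E1-2.22} $\nabla F$ and $\nabla^gF$ differ by $K$-terms.
\item Stating the $f$-form ``under (ii)'' only gives (ii)$\Rightarrow$($f$-form).
\end{itemize}

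The converse is short. By the identity above and \eqref{ein7}, the $f$-form is equivalent to $(\nabla_Zg)(X,Y)=(\nabla_Xg)(Y,fZ)+(\nabla_Yg)(X,fZ)$. Put $Y=X$ and use the cyclic identity in the form $2(\nabla_Xg)(X,W)=-(\nabla_Wg)(X,X)$ with $W=fZ$. This gives $(\nabla_{Z+fZ}\,g)(X,X)=0$, hence, by polarization, $\nabla_{(I+f)Z}\,g=0$ for all $Z$. Since $G(X,Y)=g(X,(I+f)Y)$, the operator $I+f$ is invertible exactly when $G$ is non-degenerate. This is automatic if $g$ is Riemannian, as $g((I+f)Z,Z)=g(Z,Z)$. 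In that case $\nabla g=0$. So the $f$-form is equivalent to (ii) under the standard NGT assumption that $G$ is non-degenerate. Neither your proposal nor the paper's one-line appeal to \eqref{important} makes this assumption explicit.
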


\begin{proof}
 We calculate using \eqref{E-nablaP-S0}  and \eqref{eq:contorsion-def},
\begin{align*}
 (\nabla_X\,g)(Y,Z) & 
 = X(g(Y,Z)) - g(\nabla^g_X Y+ K_XY, Z) - g(Y, \nabla^g_X Z+ K_XZ) \\
 & = (\nabla^g_X\,g)(Y,Z) - g(K_XY, Z) - g(K_XZ, Y).
\end{align*}
By this and $\nabla^g\,g=0$, the conditions (i) and (ii)
are equivalent.
The equations \eqref{ein6} and \eqref{ein5} yield, see  \cite[Eq.~(3.4)]{IZ1},
\begin{align}\label{ein7}
 (\nabla_Z\,g)(X, Y) = (\nabla_X F)(Y,Z) +(\nabla_Y F)(X,Z).
\end{align}
By \eqref{ein7} and \eqref{important},
the conditions (ii) and (iii) are equivalent.
\end{proof}

\begin{remark}\rm
When $\nabla$ preserves the metric tensor: $\nabla g=0$,
they call $K$ the \textit{contorsion tensor}; 
in this case the tensors $K$ and $T$ are related by
\begin{align}\label{E-tordfnew3}
 2\,K(Y,Z,X) = T(X,Y,Z) + T(Y,Z,X) - T(Z,X,Y).
\end{align}
Comparing \eqref{E-tordfnew3} and \eqref{E-tordfnew} yields the
following identity for the torsion of an Einstein connection
satisfying $\nabla g=0$:
\begin{align}\label{E-T4a}
 T(Z,X,Y) - T(Y,Z,X) = T(Y,Z,{f}X) -T(Z,X,{f}Y) .
\end{align}
\end{remark}

\begin{remark}\rm
Since for an Einstein connection on  $(M,G=g+F)$, the tensor $K(X,Y,Z)$ is not totally skew-symmetric, it is interesting to study its particular symmetries. 
The~skew-symmetry of $K_X$ i.e., 
$K(X,Y,Z)=-K(X,Z,Y)$, see Lemma~\ref{L-nabla-g}(i),
by \eqref{E-tordfnew}, reduces~to
\begin{align}\label{E-cond-E2}
 T(X,Y,Z) -T(Z,X,Y) - T(Z,X,fY)+T(X,Y,fZ)=0 .
\end{align}
Using \eqref{E-cond-E2} in \eqref{ein5}, we obtain 
\begin{align*}
 (\nabla_X F)(Y,Z)= -T(X,Y,Z) -T(X,Y,fZ).
\end{align*}
If \eqref{E-cond-E2} is true, then $[K_X,K_Y]: TM\to TM$ is a skew-symmetric endomorphism:
\begin{align}\label{E-cond-KKZ}
 [K_X, K_Y](Z,Z)=0,\quad X,Y,Z\in\mathfrak{X}_M.
\end{align}
If a nonsymmetric Riemannian manifold $(M,G=g+F)$ admits an Einstein connection $\nabla$ with totally skew-symmetric torsion, and 
\eqref{E-f-torsion} holds, then \eqref{E-cond-KKZ} is true.
Indeed, using \eqref{E-f-torsion} and the totally skew-symmetric torsion property in the LHS of \eqref{E-cond-E2} gives~zero.
\end{remark}

The following property $K_XY=-K_YX$ of the difference tensor $K$ 
(i.e., the skew--symmetry of \(K(X,Y,Z)\) only in the first two arguments) by \eqref{E-tordfnew} reduces~to
\begin{align}\label{E-cond-2K}
 T(Y,Z,fX) + T(X,Z,fY) = 0,
\end{align}
and characterizes \emph{special Einstein connections},
i.e., the symmetric part of an Einstein connection $\nabla$ coincides with the Levi-Civita connection $\nabla^g$, see~\cite{Prvanovic-95}.
In this case, \eqref{E-tordfnew2} yields $K_XY=\tfrac12\,T(X,Y)$.
The~condition \eqref{E-cond-2K} doesn't imply the total skew--symmetry of \(K(X,Y,Z)\), which, in addition, requires the condition \eqref{E-cond-E2}. 

\begin{lemma}\label{L-KT-1} 
If the tensor $K$ is totally skew-symmetric, then the torsion tensor $T$ is totally skew-symmetric, and the $f$-torsion condition \eqref{E-f-torsion} is true.
\end{lemma}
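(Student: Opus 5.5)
The plan is to use the two linear relations \eqref{E-tordfnew} and \eqref{E-tordfnew2} between $K$ and $T$. No differential information is needed; the whole argument is algebraic.

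\emph{Step 1 (total skew-symmetry of $T$).} If $K(X,Y,Z)$ is totally skew-symmetric, then in particular $K(Y,X,Z)=-K(X,Y,Z)$. Hence \eqref{E-tordfnew2} gives
\[
T(X,Y,Z)=K(X,Y,Z)-K(Y,X,Z)=2K(X,Y,Z).
\]
So $T$ is a constant multiple of $K$ and inherits its total skew-symmetry.

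\emph{Step 2 (an $f$-identity).} Substitute $T=2K$ into \eqref{E-tordfnew}, whose left-hand side is $2K(X,Y,Z)=T(X,Y,Z)$. The terms $T(X,Y,Z)$ cancel, leaving
\[
T(Y,Z,fX)=T(Z,X,fY)\qquad (X,Y,Z\in\mathfrak{X}_M).
\]
Cycling the variables shows that the expression $T(X,Y,fZ)$ is invariant under cyclic permutations of $(X,Y,Z)$. This is exactly the vanishing of $T(X,Y,fZ)-T(Z,X,fY)$, which the Remark above says \eqref{E-f-torsion} produces in the left-hand side of \eqref{E-cond-E2} for totally skew torsion. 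Equivalently, since $K_X$ is then skew, \eqref{E-cond-E2} holds, and with $T$ totally skew its first two terms cancel.

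\emph{Step 3 (matching the stated form of \eqref{E-f-torsion}).} The remaining task is to rewrite the identity of Step 2 in whatever form \eqref{E-f-torsion} is stated. For this I will use the total skew-symmetry of $T$ from Step 1 together with the skew-symmetry $g(fX,Y)=-g(X,fY)$. For instance, $T(Z,X,fY)=-T(Z,fY,X)=T(fY,Z,X)$, so Step 2 reads $T(Y,Z,fX)=T(fY,Z,X)$. This moves $f$ between the arguments, which should give \eqref{E-f-torsion} in the form of an $f$-twisted symmetry such as $T(fX,Y,Z)=T(X,fY,Z)$, or an equivalent form.

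The main obstacle is this last bookkeeping step. I must make sure the identity obtained in Step 2 is equivalent to, and not merely implied by, the precise formulation of \eqref{E-f-torsion}, checking all index placements under the skew-symmetries. The first two steps are immediate.
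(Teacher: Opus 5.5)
Your Steps 1 and 2 are correct, and Step 1 is a genuinely shorter route than the paper's. The paper first extracts \eqref{E-cond-2K}, i.e.\ $T(Y,Z,fX)=T(Z,X,fY)$, from $K(X,Y,Z)+K(Y,X,Z)=0$. It then uses Lemma~\ref{L-nabla-g} to get $\nabla g=0$, hence the identity \eqref{E-T4a}. Combining the two gives the cyclic symmetry $T(Z,X,Y)=T(Y,Z,X)$, which together with skew-symmetry in the first two slots yields total skew-symmetry of $T$.

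You bypass metricity and \eqref{E-T4a} entirely: \eqref{E-tordfnew2} gives $T=2K$ at once. Substituting this into \eqref{E-tordfnew} reproduces the same key identity the paper obtains in its first step. The paper's detour has the by-product $\nabla g=0$ and shows the lemma as the overlap of the ``special'' condition \eqref{E-cond-2K} with the metric condition \eqref{E-cond-E2}. Your version is shorter and gives the explicit relation $K_XY=\tfrac12\,T(X,Y)$.

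What you have not done is finish Step 3. You should, since it is the second half of the claim; the paper is terse here too, but it takes only two lines. Since $g(-fT(X,Y),Z)=g(T(X,Y),fZ)$ and $g$ is non-degenerate, \eqref{E-f-torsion} is equivalent to the $(0,3)$-identity $T(fX,Y,Z)=T(X,fY,Z)=T(X,Y,fZ)$. Your rewritten Step~2 identity $T(Y,Z,fX)=T(fY,Z,X)$ says $T(fA,B,C)=T(A,B,fC)$ for all $A,B,C$. Then, using only skew-symmetry of $T$ in its first two arguments, $T(A,fB,C)=-T(fB,A,C)=-T(B,A,fC)=T(A,B,fC)$. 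Hence all three expressions coincide and \eqref{E-f-torsion} holds.

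Your worry about equivalence is also unnecessary. You only need the implication ``Step 2 identity $+$ total skew-symmetry $\Rightarrow$ \eqref{E-f-torsion}'', not an equivalence, though the equivalence does in fact hold.
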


\begin{proof}
Let the tensor $K(X,Y,Z)$ be totally skew-symmetric. Then
we obtain 
\[
K(X,Y,Z) + K(Y,X,Z) = 0 \implies T(Y,Z,fX) -T(Z,X,fY) = 0.
\]
By Lemma~\ref{L-nabla-g}, we have $\nabla g=0$.
Combining this identity with \eqref{E-T4a} and using the assumption that $K(X,Y,Z)$ is totally skew-symmetric, we deduce
\[
T(X,Y,Z) = -T(X,Z,Y)\quad \text{and}\quad T(X,Y,fZ) = -T(X,Z,fY).
\]
So, $T(X,Y,Z)$ is totally skew-symmetric and the $f$-torsion condition \eqref{E-f-torsion} is true.
\end{proof}

\begin{proposition}
Let the condition
\eqref{E-cond-E2} be true. 
Then we get the equality 
\begin{align}\label{E-df-0b}
 K(X,Y,Z) = T(Z,Y,X) - \tfrac12\,dF(X,Y,Z);  
\end{align}
therefore, the tensor $K$ is totally skew-symmetric if and only if the torsion $T$ is totally skew-symmetric. Moreover, the total skew-symmetry of $T$ implies the 
condition \eqref{E-f-torsion}. 
\end{proposition}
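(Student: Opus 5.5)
The plan is to reduce everything to the metric case and then use the cyclic identity \eqref{ein8}. By the preceding Remark, condition \eqref{E-cond-E2} is exactly the skew-symmetry $K(X,Y,Z)=-K(X,Z,Y)$. By Lemma~\ref{L-nabla-g}, this is equivalent to $\nabla g=0$. Hence $K$ is a genuine contorsion tensor, and the classical relation \eqref{E-tordfnew3} is available.

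First I relabel \eqref{E-tordfnew3}, replacing $(X,Y,Z)$ by $(Z,X,Y)$, which gives
\[
2\,K(X,Y,Z)=T(Z,X,Y)+T(X,Y,Z)-T(Y,Z,X).
\]
Next I eliminate the sum $T(X,Y,Z)+T(Z,X,Y)$ using \eqref{ein8}, namely $T(X,Y,Z)+T(Z,X,Y)=-dF(X,Y,Z)-T(Y,Z,X)$. This yields
\[
2K(X,Y,Z)=-dF(X,Y,Z)-2\,T(Y,Z,X).
\]
Since $T(Y,Z,X)=-T(Z,Y,X)$, this is precisely \eqref{E-df-0b}. The only point needing care is the index bookkeeping in the relabeling. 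One could instead start from \eqref{E-tordfnew} and simplify using \eqref{E-cond-E2}, but \eqref{E-tordfnew3} avoids the $f$-terms entirely, so I would use that route.

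For the equivalence, note that $dF$ is a $3$-form and hence totally skew-symmetric. By \eqref{E-df-0b}, $K$ is therefore totally skew-symmetric if and only if the tensor $(X,Y,Z)\mapsto T(Z,Y,X)$ is. That tensor is totally skew-symmetric exactly when $T$ is, since a permutation of the arguments preserves total skew-symmetry.

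Finally, suppose $T$ is totally skew-symmetric. Then $K$ is totally skew-symmetric by the equivalence just proved. Lemma~\ref{L-KT-1} then gives the $f$-torsion condition \eqref{E-f-torsion}. Nothing here seems to be a real obstacle; the main risk is a sign or permutation slip in the relabeling step.
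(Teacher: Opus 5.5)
Your proof is correct and is essentially the paper's argument. Both proofs combine the same three ingredients: the cyclic identity \eqref{ein8}, the relation $T(X,Y,Z)=K(X,Y,Z)-K(Y,X,Z)$, and the skew-symmetry of $K$ in its last two arguments, which is what \eqref{E-cond-E2} encodes. The only difference is ordering: the paper rewrites \eqref{ein8} in terms of $K$ and then simplifies, whereas you invoke the already-inverted contorsion formula \eqref{E-tordfnew3}, and your relabeling and signs check out. The equivalence step and the appeal to Lemma~\ref{L-KT-1} match the paper.
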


\begin{proof}
Applying \eqref{E-tordfnew2} to \eqref{ein8} gives
\begin{align}\label{E-df-1}
\notag
 & K(X,Y,Z) - K(Y,X,Z) 
 +K(Y,Z,X) - K(Z,Y,X) \\
 +& K(Z,X,Y) - K(X,Z,Y) = - dF(X,Y,Z).   
 \end{align}
Then applying \eqref{E-cond-E2} to \eqref{E-df-1}, yields 
\begin{align}\label{E-df-2}
\notag
  & K(X,Y,Z) - K(Y,X,Z) 
 -K(Y,X,Z) +K(Z,X,Y) 
 +K(Z,X,Y) +K(X,Y,Z) \\
  & = 2\big(K(X,Y,Z) - K(Y,X,Z) +K(Z,X,Y) \big)
  = - dF(X,Y,Z).   
 \end{align}
 Using \eqref{E-tordfnew2} to \eqref{E-df-2} gives \eqref{E-df-0b}.
 By the above,  the total skew-symmetry of $T$ implies the total skew-symmetry of~$K$.
 By Lemma~\ref{L-KT-1}, the $f$-torsion condition 
\eqref{E-f-torsion} is true.
\end{proof}


\begin{proposition}[see Proposition~2 of \cite{Prvanovic-95}]
For an Einstein connection $\nabla$ with torsion $T$, we have 
\begin{align}\label{2.7^*}
\notag
 2(\nabla^g_X F)(Y,Z)= & -T(Z,X,Y) -T(X,Y,Z) -T(fZ,X,fY)  \\
 & -T(X,fY,fZ) +T(Y,fZ,fX) +T(fY,Z,fX). 
\end{align} 
\end{proposition}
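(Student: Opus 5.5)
The plan is a direct computation. We express $\nabla^g F$ through $\nabla F$ and the difference tensor $K$, and then substitute the known expressions of $\nabla F$ and $K$ in terms of $T$, namely \eqref{ein5} and \eqref{E-tordfnew}. No $f^2$-torsion or symmetry assumption is needed; we only use the skew-symmetry of $T(X,Y)$ in $X,Y$ and of $f$.

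\emph{Step 1.} By \eqref{E-nablaP-S0} and \eqref{eq:contorsion-def}, $\nabla_X Y=\nabla^g_XY+K_XY$, so
\[
(\nabla^g_X F)(Y,Z)=(\nabla_X F)(Y,Z)+F(K_XY,Z)+F(Y,K_XZ).
\]
Using $F(U,V)=g(U,fV)$ and the skew-symmetry of $f$, we get $F(K_XY,Z)=K(X,Y,fZ)$ and $F(Y,K_XZ)=-g(fY,K_XZ)=-K(X,Z,fY)$. Hence
\[
2(\nabla^g_X F)(Y,Z)=2(\nabla_X F)(Y,Z)+2K(X,Y,fZ)-2K(X,Z,fY).
\]

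\emph{Step 2.} Rename the variables in \eqref{ein5} (put $X$ in the differentiated slot) to obtain
\[
2(\nabla_XF)(Y,Z)=-T(X,Y,Z)-T(X,Y,fZ)-T(Z,X,Y)-T(Z,X,fY).
\]
Apply \eqref{E-tordfnew} with $fZ$, respectively $fY$, in the third slot:
\[
2K(X,Y,fZ)=T(X,Y,fZ)-T(fZ,X,fY)+T(Y,fZ,fX),
\]
\[
2K(X,Z,fY)=T(X,Z,fY)-T(fY,X,fZ)+T(Z,fY,fX).
\]

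\emph{Step 3.} Add the three expressions and simplify using only $T(U,V,\cdot)=-T(V,U,\cdot)$:
\begin{itemize}
\item the two terms $\pm T(X,Y,fZ)$ cancel;
\item $-T(Z,X,fY)-T(X,Z,fY)=0$;
\item $T(fY,X,fZ)=-T(X,fY,fZ)$;
\item $-T(Z,fY,fX)=T(fY,Z,fX)$.
\end{itemize}
The remaining six terms are exactly those of \eqref{2.7^*}.

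The only potential obstacle is sign bookkeeping in the variable renaming of \eqref{ein5} and in moving $f$ across $g$. Each sign can be checked individually, and Step~3 serves as a consistency check, since all terms not appearing in \eqref{2.7^*} must cancel.
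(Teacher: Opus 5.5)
Your proposal is correct and follows the paper's own proof. The paper likewise first derives $(\nabla_X F)(Y,Z)=(\nabla^g_X F)(Y,Z)-K(X,Y,fZ)+K(X,Z,fY)$ from \eqref{E-nablaP-S0} and \eqref{eq:contorsion-def}, then substitutes \eqref{ein5} and \eqref{E-tordfnew}; your term-by-term cancellation in Step 3, which the paper leaves implicit, checks out.
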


\begin{proof} 
By \eqref{E-nablaP-S0},
we have
\begin{align*}
(\nabla_X F)(Y,Z)
=
X\big(F(Y,Z)\big)
- F(\nabla_X Y, Z)
- F(Y,\nabla_X Z).
\end{align*}
Using the relation \eqref{eq:contorsion-def},
we obtain
\begin{align}\label{E1-2.22}
\notag
(\nabla_X F)(Y,Z)
\notag
&= X\big(F(Y,Z)\big)
- F(\nabla^g_X Y + K_XY, Z)
- F(Y, \nabla^g_X Z + K_XZ) \\
\notag
&=(\nabla^g_X F)(Y,Z)- F\big(K_XY,Z\big) -F\big(Y,K_XZ\big)\\
\notag
&=(\nabla^g_X F)(Y,Z)- g\big(K_XY,fZ\big) +g\big(fY,K_XZ\big)\\
&=(\nabla^g_X F)(Y,Z)- K(X,Y,fZ) +K(X,Z,fY) ,
\end{align}
Finally, \eqref{2.7^*} follows from \eqref{ein5}, \eqref{E-tordfnew} and \eqref{E1-2.22}.
\end{proof} 


\section{Einstein connection of an almost contact metric manifold}
\label{sec:a.c.m}

Contact Riemannian geometry is a significant field in both pure mathe\-matics and theoretical physics,
for example, \cite{Blair-survey}. 
Developing the approach of \cite{Prvanovic-95}, 
we deduce equations for 
Einstein connection of an almost contact metric manifold $(M^{2n+1},f,\xi,\eta,g)$.
Here, $g$ is a Riemannian metric, 
$f$ is a skew-symmetric (1,1)-tensor of rank $2\,n$, $\xi$ is a unit (Reeb) vector field, and $\eta$ is a 1-form such that $\eta(\xi) = 1$. These tensors satisfy the~relations 
\begin{align}\label{eq-7.0a}
 f^2 = -{I} + \eta\otimes\xi, \quad
 g(f X, f Y) = g(X,Y) - \eta(X)\eta(Y) ,
\end{align}
where ${I}$ is the identity map. 
Differentiating by $X$ the first identity of \eqref{eq-7.0a}, yields
\[
\nabla^g_X f^2 = \nabla^g_X(-{I}+\eta\otimes\xi)
=\nabla^g_X(\eta\otimes\xi).
\]
Using the Leibniz rule, we have
\begin{align*}
 \nabla^g_X(\eta\otimes\xi) = (\nabla^g_X\eta)\otimes\xi
 +\eta\otimes \nabla^g_X\xi, \qquad
\nabla^g_X f^2=(\nabla^g_Xf)\,f +f\,\nabla^g_Xf .
\end{align*}
Applying the equalities above to $Y$, yields
\begin{equation}\label{eq31}
(\nabla^{g}_{X}f)\,fY+f\,(\nabla^{g}_{X}f)Y
=(\nabla^{g}_{X}\eta)(Y)\,\xi+\eta(Y)\,\nabla^{g}_{X}\xi.
\end{equation}

\begin{definition}\rm
A linear connection $\nabla$ satisfies the $f^2$-\textit{torsion condition} if the following is~true:
\begin{align}\label{E-Q-torsion}
 T(f^2X, Y) = T(X,f^2Y) = f^2 T(X,Y) ,
\end{align} 
which can be equivalently written as
\begin{align*}
 T(f^2X, Y,Z) = T(X,f^2Y,Z) = T(X,Y,f^2Z) .
\end{align*} 
\end{definition}

If \eqref{E-Q-torsion} is true for an Einstein connection, then, using \eqref{ein8} and \eqref{2.7^*}, we have
\begin{align*}
 & dF(f^2X, Y, Z) = dF(X,f^2Y, Z) = dF(X,Y,f^2Z), \\
\notag
 & (\nabla^g F)(f^2X, Y, Z) = (\nabla^g F)(X,f^2Y, Z) 
  = (\nabla^g F)(X,Y,f^2Z).
\end{align*} 
If the $f$-torsion condition \eqref{E-f-torsion} holds, then the $f^2$-torsion condition \eqref{E-Q-torsion} also holds. 
The~converse is not true in general.

\begin{lemma}
The following 
conditions are true for all vector fields
$X,Y,Z$ on an almost contact metric manifold:
\begin{align}\label{eq32b}
(\nabla^g_X F)(f Y,f Z) &= -(\nabla^g_X F)(Y,Z)
+ \eta(Y)\,(\nabla^g_X F)(\xi,Z) + \eta(Z)\,(\nabla^g_X F)(Y,\xi) , \\
\label{eq32c}
(\nabla^g_X F)(f Y,Z) &= (\nabla^g_X F)(Y,f Z)
+\eta(Z)\,(\nabla^g_X F)(f Y,\xi) -\eta(Y)\,(\nabla^g_X F)(\xi,f Z).
\end{align}
\end{lemma}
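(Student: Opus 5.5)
The plan is to start from the identity \eqref{eq31} and convert it into statements about $\nabla^g F$ by means of \eqref{important}, namely $(\nabla^g_X F)(Y,Z)=g(Y,(\nabla^g_X f)Z)$. Since $f$ is skew-symmetric and $\nabla^g$ is metric, $\nabla^g_X f$ is again skew-symmetric. This lets us move $f$ across in expressions such as $g(fY,(\nabla^g_X f)Z)=-g(Y,f(\nabla^g_X f)Z)$. We will also use the standard almost contact facts $f\xi=0$, $\eta\circ f=0$ and $\eta(\nabla^g_X\xi)=0$. The last one follows from $g(\xi,\xi)=1$. Differentiating $f\xi=0$ gives $(\nabla^g_X f)\xi=-f\nabla^g_X\xi$, and differentiating $\eta(Y)=g(\xi,Y)$ gives $(\nabla^g_X\eta)(Y)=g(\nabla^g_X\xi,Y)$.

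For \eqref{eq32c}, take the inner product of \eqref{eq31} (with the argument $Z$) against $Y$. This gives
\[
g(Y,(\nabla^g_Xf)fZ)+g(Y,f(\nabla^g_Xf)Z)=(\nabla^g_X\eta)(Z)\,\eta(Y)+\eta(Z)\,g(Y,\nabla^g_X\xi).
\]
The first term on the left is $(\nabla^g_XF)(Y,fZ)$, and the second is $-(\nabla^g_XF)(fY,Z)$. The right-hand side must then be rewritten through $\nabla^g F$ with $\xi$ inserted. We have
\[
(\nabla^g_XF)(fY,\xi)=g(fY,(\nabla^g_Xf)\xi)=-g(fY,f\nabla^g_X\xi)=-g(Y,\nabla^g_X\xi),
\]
using \eqref{eq-7.0a} and $\eta(\nabla^g_X\xi)=0$. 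Similarly,
\[
(\nabla^g_XF)(\xi,fZ)=-(\nabla^g_XF)(fZ,\xi)=g(Z,\nabla^g_X\xi)=(\nabla^g_X\eta)(Z).
\]
Substituting these should give \eqref{eq32c} exactly, possibly after tracking signs once more.

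For \eqref{eq32b}, apply \eqref{eq32c} with $Y$ replaced by $fY$. The left side becomes $(\nabla^g_XF)(f^2Y,Z)=-(\nabla^g_XF)(Y,Z)+\eta(Y)(\nabla^g_XF)(\xi,Z)$. The right side is $(\nabla^g_XF)(fY,fZ)+\eta(Z)(\nabla^g_XF)(f^2Y,\xi)$, where the term with $\eta(fY)$ vanishes. Using $f^2=-I+\eta\otimes\xi$ together with $(\nabla^g_XF)(\xi,\xi)=0$ (skew-symmetry), we get $(\nabla^g_XF)(f^2Y,\xi)=-(\nabla^g_XF)(Y,\xi)$. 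Solving for $(\nabla^g_XF)(fY,fZ)$ then yields \eqref{eq32b}.

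The main obstacle is bookkeeping rather than an idea. The sign conventions ($F(X,Y)=g(X,fY)$, the skew-symmetry of $\nabla^g_Xf$, and the identification of the $\eta$-terms from \eqref{eq31} with $\nabla^g F$ evaluated on $\xi$) must all be handled consistently. I would verify the $\xi$-term identities separately first, before assembling the two formulas.
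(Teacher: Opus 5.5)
Your proposal is correct and is essentially the paper's argument: both use \eqref{eq31}, the representation $(\nabla^g_XF)(Y,Z)=g(Y,(\nabla^g_Xf)Z)$, and the identification of the $\xi$-terms via $(\nabla^g_Xf)\xi=-f\nabla^g_X\xi$ and $\eta(\nabla^g_X\xi)=0$, and your signs come out exactly with no further adjustment. The only difference is the order: the paper pairs \eqref{eq31} with $fY$ to get \eqref{eq32b} first and then substitutes $Z\to fZ$ to get \eqref{eq32c}, whereas you pair with $Y$ to get \eqref{eq32c} first and then substitute $Y\to fY$ to get \eqref{eq32b}.
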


\begin{proof}
Using $\nabla^g g=0$ and $F(Y,Z)=g(Y,f Z)$, we have
$(\nabla^g_X F)(Y,Z)=g\big(Y,(\nabla^g_Xf)Z\big)$.
Hence
\[
(\nabla^g_X F)(f Y,f Z) =g\big(f Y,(\nabla^g_Xf)\,f Z\big).
\]
Substituting \eqref{eq31}, we get
\[
(\nabla^g_Xf)\,f Z
=-f\,(\nabla^g_Xf)Z +(\nabla^g_X\eta)(Z)\xi +\eta(Z)\nabla^g_X\xi.
\]
Using $g(\xi,f Z)=0$ gives
\begin{align*}
(\nabla^g_X F)(f Y,f Z)
&=-\,g\left(f Y, f\,(\nabla^g_Xf)Z\right)
+\eta(Z)\,g(f Y, \nabla^g_X\xi).
\end{align*}
By $g(f X,f Y)=g(X,Y)-\eta(X)\eta(Y)$ and $\eta(f Z)=0$, we obtain
\[
g\!\left(f Y, f(\nabla^g_Xf)Z\right)
=g\big(Y,(\nabla^g_Xf)Z\big)-\eta(Y)\eta\big((\nabla^g_Xf)Z\big).
\]
Therefore,
\begin{align}\label{eq3.5}
(\nabla^g_X F)(f Y,f Z) =-g\big(Y,\nabla^g_Xf)Z\big) +\eta(Y)\eta\big((\nabla^g_Xf)Z\big) 
+\eta(Z)\,g(fY, \nabla^g_X\xi).
\end{align}
Finally, substituting the equalities
\begin{align*}
g\big(Y,(\nabla^g_Xf)Z\big)&=(\nabla^g_XF)(Y,Z),\quad
\eta\big((\nabla^g_Xf)Z\big)=g\big(\xi,(\nabla^g_Xf)Z\big)=(\nabla^g_XF)(\xi,Z), \\
(\nabla^g_XF)(Y,\xi)&=g\big(Y,(\nabla^g_Xf)\xi\big)
=-g\big(Y,f\nabla^g_X\xi\big)=g(\nabla^g_X\xi,f Y) .
\end{align*}
in \eqref{eq3.5}, yields \eqref{eq32b}. 
Taking $Z\to f Z$ in \eqref{eq32b}, we get \eqref{eq32c}.
\end{proof}


\begin{theorem}\label{thm72}
Let $(M^{2n+1},f,\xi,\eta,g)$ be an almost contact metric manifold, considered as an NGT space $(M,G=g+F,\nabla)$ with $F(X,Y)=g(X,fY)$.
If the Einstein connection $\nabla$ satisfies the $f^2$-torsion condition \eqref{E-Q-torsion}, then
\begin{align}\label{eq:TXiZX}
 \nabla^g\,\xi =\nabla^g\,\eta= 0, 
\quad 
 T(\xi,\cdot,\cdot)=T(\cdot,\cdot,\xi)=0,
\quad 
 (\nabla^g F)(\xi,\cdot,\cdot)=(\nabla^gF)(\cdot,\cdot,\xi)=0.
\end{align}
In particular, the torsion $T$ is horizontal, and for $X,Y,Z\perp\xi$ it is given by the same formula \eqref{Eq-2.13} as for the almost Hermitian case:
\begin{align}\label{Eq-phi-T0}
\notag
 2\,T(Y,Z,X) =&\ 2\,(\nabla^g_{fX} F)(fY,Z)
  - (\nabla^g_{fY} F)(fZ,X) -(\nabla^g_{fZ} F)(X,fY) \\
 &-(\nabla^g_Y F)(Z,X) -(\nabla^g_Z F)(X,Y) .
\end{align}
\end{theorem}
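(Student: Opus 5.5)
The plan is to first extract the vanishing statements \eqref{eq:TXiZX} from the $f^2$-torsion condition, and only then to solve the linear system for $T$ on the horizontal distribution, copying the almost Hermitian argument.

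\textbf{Step 1: vanishing of $T$ on $\xi$.} Since $f^2\xi=0$, putting $X=\xi$ in the first equality of \eqref{E-Q-torsion} gives
\[
0=T(f^2\xi,Y,Z)=T(\xi,f^2Y,Z)=T(\xi,Y,f^2Z).
\]
Replacing $Y$ by $-Y+\eta(Y)\xi$ shows that $T(\xi,Y,Z)=\eta(Y)T(\xi,\xi,Z)=0$. Hence $T(\xi,\cdot,\cdot)=0$, and by skew-symmetry also $T(\cdot,\xi,\cdot)=0$. Similarly, $T(X,Y,f^2\xi)=0$ equals $T(f^2X,Y,\xi)$. Writing $f^2X=-X+\eta(X)\xi$ and using $T(\xi,\cdot,\cdot)=0$, we get $T(X,Y,\xi)=0$. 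So $T$ is horizontal in all three slots.

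\textbf{Step 2: the $\nabla^gF$ and $\nabla^g\xi$ statements.} Plug into \eqref{2.7^*}. Every term has the form $T(\cdot,\cdot,\cdot)$ with $\xi$ in some slot when $X$, $Y$ or $Z$ equals $\xi$; the $f$-terms give $f\xi=0$. Therefore $(\nabla^gF)(\xi,\cdot,\cdot)=0$ and $(\nabla^gF)(\cdot,\cdot,\xi)=0$, and by skew-symmetry the middle slot vanishes as well.

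Next, $(\nabla^g_XF)(Y,\xi)=g(\nabla^g_X\xi,fY)$, as in the proof of \eqref{eq32b}. This vanishes for all $Y$, so $\nabla^g_X\xi$ is parallel to $\xi$. Since $g(\xi,\xi)=1$ gives $g(\nabla^g_X\xi,\xi)=0$, we conclude $\nabla^g\xi=0$. Finally, $\nabla^g\eta=0$ because $\eta=g(\xi,\cdot)$ and $\nabla^g g=0$.

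\textbf{Step 3: horizontal formula.} On $\xi^\perp$ we have $f^2=-I$ and $g(fX,fY)=g(X,Y)$, and by Step 2 the identities \eqref{eq32b}, \eqref{eq32c} reduce to the Hermitian ones $(\nabla^g_XF)(fY,fZ)=-(\nabla^g_XF)(Y,Z)$ and $(\nabla^g_XF)(fY,Z)=(\nabla^g_XF)(Y,fZ)$. Everything then runs exactly as in Prvanovi\'c's derivation. Write \eqref{2.7^*} as a linear relation $A(X,Y,Z)$ expressing $2(\nabla^g_XF)(Y,Z)$ in $T$. Evaluate it at the substituted triples $(fX,fY,Z)$, $(fY,fZ,X)$, $(fZ,X,fY)$, $(Y,Z,X)$ and $(Z,X,Y)$, and form the combination appearing on the right of \eqref{Eq-phi-T0}.

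To make terms cancel, use the skew-symmetry of $T$ in its first two slots together with the rule that a double application of $f$ inside any slot gives a minus sign. This rule holds because, by the $f^2$-torsion condition and Step 1, $T(f^2X,Y,Z)=-T(X,Y,Z)$ for horizontal arguments. The target is that all $T$-terms except $2T(Y,Z,X)$ cancel.

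\textbf{Main obstacle.} The bookkeeping in this last cancellation is the main obstacle. My first attempt would be to list the $6\times5=30$ terms, normalize each using $f^2=-1$ and skew-symmetry, and organize them by how many $f$'s occur (zero or two per term, since odd counts cannot arise) and by slot pattern. If the cancellation fails, the fallback is to verify \eqref{Eq-phi-T0} by invoking the almost Hermitian formula \eqref{Eq-2.13} directly. That formula was derived using only these pointwise algebraic identities, so it applies verbatim on $\xi^\perp$.
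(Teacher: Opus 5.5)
\textbf{Step 3 has a genuine gap.} Steps 1 and 2 are correct, and in fact tidier than the paper. You get $T(\xi,\cdot,\cdot)=T(\cdot,\cdot,\xi)=0$ directly from $f^2\xi=0$. You also actually prove $\nabla^g\xi=0$ from $(\nabla^g_XF)(Y,\xi)=g(\nabla^g_X\xi,fY)=0$, which the paper does not spell out.

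The failure is in the planned cancellation. Substitute \eqref{2.7^*} at the five triples for horizontal $X,Y,Z$, using only skew-symmetry of $T$ in its first two slots and $f^2=-I$. The four terms with coefficient $-\tfrac12$ sum to $T(fY,fZ,X)+T(Y,Z,X)$. Adding $2(\nabla^g_{fX}F)(fY,Z)$ does not leave $2T(Y,Z,X)$ alone; the right-hand side of \eqref{Eq-phi-T0} becomes
\[
2\,T(Y,Z,X)+C(fX,Y,fZ),\qquad C(X,Y,Z):=T(X,Y,Z)+T(Z,X,Y)+T(X,fY,fZ)+T(fZ,X,fY).
\]
The residual $C$ is not zero for a general tensor that is only skew in its first two arguments. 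For example, take $\mathbb{R}^2$ with $fe_1=e_2$ and $T(e_1,e_2,e_2)=1$; then $C(fe_1,e_1,fe_1)=-2$. So the target ``all $T$-terms except $2T(Y,Z,X)$ cancel'' is false with the tools you list.

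\textbf{The missing idea.} On $\ker\eta$, the identity $(\nabla^g_XF)(fY,fZ)=-(\nabla^g_XF)(Y,Z)$ from \eqref{eq32b} is not automatically respected by \eqref{2.7^*}. It is a genuine constraint on $T$. Evaluating \eqref{2.7^*} at $(X,Y,Z)$ and at $(X,fY,fZ)$ and adding gives
\[
(\nabla^g_XF)(Y,Z)+(\nabla^g_XF)(fY,fZ)=-C(X,Y,Z),
\]
so the Hermitian identity forces $C\equiv0$ on horizontal vectors. This is exactly the step in the paper, following Prvanovi\'c, that reduces \eqref{2.7^*} to \eqref{Eq-2.9}. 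Once you add it, your direct substitution closes immediately, because the residual is $C(fX,Y,fZ)=0$.

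Your fallback of invoking \eqref{Eq-2.13} on $\xi^\perp$ implicitly contains this step. However, its stated justification (``derived using only these pointwise algebraic identities'') is precisely what fails. It holds only if the $\nabla^gF$-identity from \eqref{eq32b} is an explicit input, and your bookkeeping never uses it.
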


\begin{proof}
We write $Z=f^2Z'+\eta(Z)\xi$, where $Z'\perp\xi$, and by 
\eqref{E-Q-torsion}, we~get
\begin{align}\label{eq77b}
 T(\xi,Z,\xi)=T(\xi,f^2Z',\xi) +\eta(Z)\,T(\xi,\xi,\xi)
 =T(\xi,f^2Z',\xi)=T(f^2\xi,Z',\xi)=0.
\end{align} 
Using 
the $f^2$-torsion condition \eqref{E-Q-torsion} again
and \eqref{eq77b}, we obtain
\begin{align*}
 T(X,Z,\xi)&=T(X,f^2Z',\xi) +\eta(Z)\,T(X,\xi,\xi)
 =T(f^2X,Z',\xi)=0, \\
 T(\xi,Z,X)&=T(\xi,f^2Z',X)+\eta(Z)\,T(\xi,\xi,X)
 =T(f^2\xi,Z',X)=0.
\end{align*} 
Hence,  
\begin{align}\label{eq77c}
T(\xi,Z,\xi)=T(X,Z,\xi)=T(\xi,Z,X)=0.
\end{align}
Substituting $X=\xi$ into \eqref{2.7^*}, we obtain
\begin{align}\label{Eq-acm0}
 \nabla^g_\xi\,F =0,
 \quad\mbox{that is,}\quad \nabla^g_\xi\,f=0.
\end{align}
Therefore, the $\xi$-trajectories are $g$-geodesics.
From \eqref{2.7^*},
we have
\begin{align}\label{Eq-acm1a}
 (\nabla^g_X F)(Y,Z)-(\nabla^g_X F)(fY,fZ) = T(Y,fZ,fX)+T(fY,Z,fX) .
\end{align}
Taking into account (\ref{eq32b}), (\ref{eq77c}) and \eqref{Eq-acm0}, from \eqref{Eq-acm1a} we obtain
\begin{align}\label{Eq-acm1}
2 (\nabla^g_X F)(Y,Z)= T(Y,fZ,fX)+T(fY,Z,fX) .
\end{align}
Replacing $X\to f X$ and $Y\to f Y$ in \eqref{Eq-acm1}, we get
\begin{align*}
2 (\nabla^g_{fX} F)(fY,Z)=- T(fY,fZ,X)+T(Y,Z,X) 
=-2 (\nabla^g_{fX} F)(fZ,Y).
\end{align*}
{
We calculate, following the approach of \cite{Prvanovic-95}:
\begin{align}\label{Eq-2.4}
(\nabla^g_X F)(fY,fZ)=-(\nabla^g_X F)(Y,Z), \qquad
(\nabla^g_X F)(fY,Z)=(\nabla^g_X F)(Y,fZ).
\end{align}
From \eqref{metein0} with $G=g+F$ we obtain
\begin{align}
\notag
2\,(\nabla^g_X F)(Y,Z) =&\,
-T(Z,X,Y)-T(X,Y,Z) -T(X,fY,fZ) \\
\label{Eq-2.7}
& -T(fZ,X,fY) + T(Y,fZ,fX)+T(fY,Z,fX).
\end{align}
Using \eqref{Eq-2.7}, we get
\[
(\nabla^g_X F)(Y,Z)
-
(\nabla^g_X F)(fY,fZ)
=
T(Y,fZ,fX)+T(fY,Z,fX).
\]
Substituting this into \eqref{Eq-2.7} and using \eqref{Eq-2.4}, we obtain
\begin{align*}
 T(X,Y,Z)+T(Z,X,Y) + T(X,fY,fZ)+T(fZ,X,fY) = 0.
\end{align*}
Thus \eqref{Eq-2.7} reduces to
\begin{align}\label{Eq-2.9}
 2\,(\nabla^g_X F)(Y,Z) = T(Y,fZ,fX)+T(fY,Z,fX).
\end{align}
The cyclic sum of \eqref{Eq-2.9} is
\begin{align}\label{Eq-2.10}
\notag
(\nabla^g_X F)(Y,Z)+(\nabla^g_Y F)(Z,X)+(\nabla^g_Z F)(X,Y) \\
= -T(X,Y,Z)-T(Y,Z,X)-T(Z,X,Y).
\end{align}
Replacing $Y\to fY$ and $Z\to fZ$ in \eqref{Eq-2.10} gives
\begin{align}
\label{Eq-2.11}
\notag
(\nabla^g_X F)(fY,fZ)
+(\nabla^g_{fY} F)(fZ,X)
+(\nabla^g_{fZ} F)(X,fY) \\
= T(X,Y,Z) +T(Z,X,Y) -T(fY,fZ,X).
\end{align}
Replacing $X\to fX$ and $Y\to fY$ in 
\eqref{Eq-2.9}, we obtain
\begin{align}\label{Eq-2.12}
T(fY,fZ,X)= T(Y,Z,X) - 2\,(\nabla^g_{fX} F)(fY,Z).
\end{align}
Substituting \eqref{Eq-2.12} into \eqref{Eq-2.11}, gives the formula
\eqref{Eq-phi-T0}.
}
\end{proof}

\begin{remark}\rm
Although the $f^2$-torsion condition becomes trivial for almost Hermitian mani\-folds, it is strong enough for almost contact metric manifolds.
In Theorem~\ref{thm72}, our manifold is locally the direct product of $\mathbb R$
and an almost Hermitian manifold $(\bar M^{2n}, J, \bar g)$, and $\nabla$, being an Einstein connection on the $2n$-dimensional factor, is given by \eqref{Eq-2.13}, where $J$ is replaced by $f$.
Consequently, the 1-form $\eta$ is closed, $d\eta = 0$. 
The distribution $\ker\eta$ is involutive; thus, it is Frobenius integrable. The integral submanifold $\bar M^{2n}$ is almost~Hermitian.
\end{remark}

We work with an Einstein connection  satisfying the $f^2$-torsion condition, without assuming any symmetry of the torsion tensor.
If, in addition, the torsion $T$ is totally skew--symmetric, then the general formula \eqref{Eq-phi-T0} reduces to \eqref{Eq-phi-T0-skew}, and $\nabla^g_\xi F=0$.
This additional assumption leads to the almost--nearly cosymplectic case, which we recall below from \cite{IZ1}.

\begin{corollary}
Under the conditions of Theorem~\ref{thm72}, the torsion $T$ is totally skew-symmet\-ric if and only if
\(
\)
\eqref{eq:TXiZX} 
is true,
and the following Codazzi-type condition holds:
\begin{equation}\label{codazzi}
(\nabla^g_X F)(Y,Z)
=(\nabla^g_Y F)(Z,X)
=(\nabla^g_Z F)(X,Y)
=\tfrac13\, dF(X,Y,Z).
\end{equation}
In this case, the torsion 
is given by
\begin{equation}\label{Eq-phi-T0-skew}
T(X,Y,Z)=-(\nabla^g_Z F)(X,Y)
=-\tfrac13\, dF(X,Y,Z).
\end{equation}
\end{corollary}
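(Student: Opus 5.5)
Under the hypotheses of Theorem~\ref{thm72}, \eqref{eq:TXiZX} holds automatically. So the real content is this: for horizontal $X,Y,Z$, the torsion $T$ is totally skew-symmetric if and only if the Codazzi-type condition \eqref{codazzi} holds. The tool throughout is the reduced identity \eqref{Eq-2.9}, together with its cyclic sum \eqref{Eq-2.10} and the relations \eqref{Eq-2.4}. All $\xi$-components vanish by \eqref{eq:TXiZX}, so it suffices to work on $\ker\eta$, where $f^2=-I$.

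\emph{Forward direction.} Assume $T$ is a 3-form. By Lemma~\ref{L-KT-1} and the subsequent proposition, the $f$-torsion condition \eqref{E-f-torsion} holds, i.e.\ $T(X,Y,fZ)=-T(X,fY,Z)$ and the analogous relations. I will use this to rewrite the right-hand side of \eqref{Eq-2.9}:
\[
T(Y,fZ,fX)+T(fY,Z,fX)=-T(Y,f^2Z,X)+T(fY,Z,fX).
\]
Moving $f$ around by skew-symmetry and the $f$-torsion condition, both terms should reduce to $T(Y,Z,X)$. This gives $(\nabla^g_XF)(Y,Z)=T(Y,Z,X)=T(X,Y,Z)$, which is cyclically invariant. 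Hence the three terms in \eqref{Eq-2.10} are equal. Comparing \eqref{Eq-2.10} with \eqref{ein8} and \eqref{E-3.3} then gives each term equal to $\tfrac13\,dF(X,Y,Z)$, and the sign conventions yield \eqref{Eq-phi-T0-skew}.

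\emph{Converse.} Assume \eqref{codazzi}. Then $\nabla^gF$ is a totally skew-symmetric 3-tensor equal to $\tfrac13dF$. Substituting into the explicit formula \eqref{Eq-phi-T0}, I will use \eqref{Eq-2.4} to move the $f$'s off the derivative direction. For example,
\[
(\nabla^g_{fX}F)(fY,Z)=\tfrac13dF(fX,fY,Z),
\]
and the $f$-compatibility of $dF$ follows from the Codazzi form combined with \eqref{Eq-2.4} applied in each slot. With this, \eqref{Eq-phi-T0} collapses to a multiple of $dF(X,Y,Z)$, which forces $T=-\tfrac13dF$, manifestly a 3-form.

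The main obstacle is the bookkeeping of $f$-insertions. One must show that \eqref{codazzi} forces $dF(fX,fY,Z)=-dF(X,Y,Z)$ on horizontal vectors, i.e.\ that $dF$ is of type $(3,0)+(0,3)$. I would derive this by applying the first identity of \eqref{Eq-2.4} to $(\nabla^g_ZF)(fX,fY)$ and then using total skew-symmetry to move the derivative slot.

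Finally, the sign in \eqref{Eq-phi-T0-skew} must be checked against \eqref{ein8}. Since $T$ is a 3-form, \eqref{ein8} gives $dF=-3T$ directly, which fixes the sign.
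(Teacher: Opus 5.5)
Your converse is correct, and the paper's proof does not actually write it out. With $\omega:=\nabla^gF=\tfrac13\,dF$ a 3-form, the first identity of \eqref{Eq-2.4} gives $\omega(fX,fY,Z)=-\omega(X,Y,Z)$ on $\ker\eta$, and then \eqref{Eq-phi-T0} collapses to $T(Y,Z,X)=-\omega(X,Y,Z)$.

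The gap is in the forward direction, and it has two parts.

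\begin{itemize}
\item Lemma~\ref{L-KT-1} and the proposition after it do not give you \eqref{E-f-torsion}. The lemma assumes $K$ is totally skew-symmetric, and the proposition assumes \eqref{E-cond-E2}, i.e.\ $\nabla g=0$. Neither follows from ``$T$ is a 3-form''. In fact, for a 3-form $T$ each of these hypotheses is equivalent to $T(X,fY,Z)=T(X,Y,fZ)$, which is exactly the condition you are trying to obtain. So the citation is circular.
\item Your $(0,3)$-version of \eqref{E-f-torsion} has the wrong sign. The correct form is $T(X,fY,Z)=-g(fT(X,Y),Z)=T(X,Y,fZ)$. With your sign you get $(\nabla^g_XF)(Y,Z)=+T(X,Y,Z)$. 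Then \eqref{E-3.3} gives $dF=3T$, while \eqref{ein8} gives $dF=-3T$, which forces $T=0$. That is a contradiction, and appealing to ``sign conventions'' cannot absorb it.
\end{itemize}

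The missing ingredient is already in the proof of Theorem~\ref{thm72}. There, just before \eqref{Eq-2.9}, one obtains $T(X,Y,Z)+T(Z,X,Y)+T(X,fY,fZ)+T(fZ,X,fY)=0$ on $\ker\eta$; this is the difference of \eqref{Eq-2.7} and \eqref{Eq-2.9}.

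\begin{itemize}
\item For a 3-form this identity reads $T(X,fY,fZ)=-T(X,Y,Z)$. After the substitution $Z\to fZ$ it also gives the $f$-torsion condition on $\ker\eta$.
\item Applying it twice, $T(Y,fZ,fX)=-T(Y,Z,X)$ and $T(fY,Z,fX)=T(Z,fX,fY)=-T(Z,X,Y)=-T(Y,Z,X)$.
\item Hence \eqref{Eq-2.9} gives $(\nabla^g_XF)(Y,Z)=-T(X,Y,Z)$.
\item This expression is cyclically invariant, so together with \eqref{E-3.3} it yields \eqref{codazzi} and $T=-\tfrac13\,dF$, consistent with \eqref{ein8}.
\end{itemize}

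Repaired this way, your route is shorter than the paper's. The paper instead symmetrizes \eqref{Eq-phi-T0} to get \eqref{3fcond}. It then substitutes $X\to fX$, $Y\to fY$ and adds, obtaining \eqref{fnew}, and only after that reads off the Codazzi condition.
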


\begin{proof}
By Theorem~\ref{thm72}, we have 
$\nabla^g_\xi\,F =0$ and $\nabla^g\,\xi=0$.
Let's consider the $f^2$-torsion condition under the additional assumption that $T$ is totally skew-symmetric. 
Applying $T(Y,X,Z)=-T(Y,Z,X)$, from (\ref{Eq-phi-T0}) we have
\begin{equation}\label{3fcond}
 0 = 3(\nabla^g_{fX} F)(fY,Z) +3(\nabla^g_{fZ} F)(fY,X)
   -(\nabla^g_Z F)(X,Y) -(\nabla^g_X F)(Z,Y) .
\end{equation}
Taking $X\to fX$ and $Y\to fY$ in \eqref{3fcond} and using 
\eqref{eq:TXiZX} and the equality $(\nabla^g_X F)(fY,fZ)=-(\nabla^g_X F)(Y,Z)$, see \eqref{eq32b}, we~get
\begin{align*}
\notag
 0=\ 3\,(\nabla^g_{X} F)(Y,Z)- 3\,(\nabla^g_{fZ} F)(Y,fX)
 +(\nabla^g_Z F)(X,Y) -(\nabla^g_{fX} F)(Z,fY) .
\end{align*}
Adding the equation above to \eqref{3fcond}, gives 
\begin{equation}\label{fnew}
 (\nabla^g_{fX} F)(fY,Z)=-(\nabla^g_{X} F)(Y,Z).
\end{equation}
Substituting (\ref{fnew}) into (\ref{3fcond}), we obtain 
$(\nabla^g_X F)(Y,Z)=(\nabla^g_Z F)(X,Y)$; hence, in view of \eqref{E-3.3}, the condition \eqref{codazzi} is true. 
Finally, from (\ref{Eq-phi-T0}) and (\ref{fnew}) we get 
$T(Y,Z,X) =-(\nabla^g_{X} F)(Y,Z)$ that, in view of \eqref{codazzi}, completes the proof of~\eqref{Eq-phi-T0-skew}.
\end{proof}

An almost contact metric manifold $(M^{2n+1},f,\xi,\eta,g)$
is said to be \emph{almost-nearly cosymplectic},
see \cite{IZ1}, if the Levi--Civita covariant derivative of $f$ satisfies
\begin{equation*}
g\bigl((\nabla^g_X f)Y,Z\bigr) =
\tfrac13\, dF(f X,f Y,Z)
+\tfrac16\,\eta(Z)\,d\eta(Y,f X)
-\tfrac12\,\eta(Y)\,d\eta(f Z,X) ,
\end{equation*}
where $F(X,Y)=g(X,fY)$, in particular, $\xi$ is a Killing vector field.

\begin{corollary}[see \cite{IZ1}]
Let  $(M^{2n+1},f,\xi,\eta,g)$  be an almost contact metric manifold
with fundamental $2$--form $F$, considered as 
a nonsymmetric
Riemannian manifold 
$(M,G=g+F)$.
Then an Einstein connection $\nabla$ on $M$ has a totally skew-symmetric torsion $T$ if and only if the manifold is almost--nearly cosymplectic, 
in this case,
$T(X,Y,Z) =-\tfrac13\, dF(X,Y,Z)$.
\end{corollary}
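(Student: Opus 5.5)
The corollary concerns an arbitrary Einstein connection with totally skew-symmetric torsion. The plan is to reduce it to the previous corollary, which already handles the $f^2$-torsion case.

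\emph{Direction ``$\Rightarrow$''.} Assume $T$ is totally skew-symmetric.

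First, I would check that the $f$-torsion condition \eqref{E-f-torsion}, and hence the $f^2$-torsion condition \eqref{E-Q-torsion}, holds. By \eqref{E-tordfnew} and total skew-symmetry of $T$, the condition \eqref{E-cond-E2} is automatic once the $f$-torsion condition is known. Conversely, total skew-symmetry of $K$ gives \eqref{E-f-torsion} via Lemma~\ref{L-KT-1}. The route I would take is to use \eqref{ein5} with skew $T$. This gives
\[
2(\nabla_ZF)(X,Y)=T(Z,X,fY)-T(Z,Y,fX)-2T(Z,X,Y)+\dots
\]
Skew-symmetry of the left side in $X,Y$ then forces $T(Z,X,fY)=T(Z,Y,fX)$ up to terms that cancel. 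This is the $f$-torsion identity, recovering \cite[Sect.~3]{IZ1}.

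Once \eqref{E-Q-torsion} holds, Theorem~\ref{thm72} applies and yields \eqref{eq:TXiZX}. The previous corollary then gives the Codazzi condition \eqref{codazzi} together with $T=-\tfrac13 dF$.

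It remains to translate these into the almost-nearly cosymplectic formula. I would expand $g((\nabla^g_Xf)Y,Z)=(\nabla^g_XF)(Z,Y)$, splitting each argument into its horizontal part $-f^2(\cdot)$ and its $\xi$-component. On horizontal vectors, \eqref{fnew} and \eqref{codazzi} give $(\nabla^g_XF)(Z,Y)=\tfrac13 dF(fX,fY,Z)$. The $\eta$-terms are handled using \eqref{eq32b}, \eqref{eq32c}, and $(\nabla^g_XF)(Y,\xi)=g(\nabla^g_X\xi,fY)$. Expressing $\nabla^g\xi$ through $d\eta$ (here $\xi$ is Killing) produces the $\tfrac16$ and $\tfrac12$ terms.

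\emph{Direction ``$\Leftarrow$''.} Assume the manifold is almost-nearly cosymplectic and define $T:=-\tfrac13 dF$, which is a $3$-form. I would then verify \eqref{metein0}. This amounts to checking \eqref{ein6} and \eqref{ein5}. Equation \eqref{ein6} reads $\nabla g=0$ and follows from skew-symmetry of $K$ (Lemma~\ref{L-nabla-g}), where $K$ is computed from \eqref{E-tordfnew}. Equation \eqref{ein5} reduces, via \eqref{E1-2.22}, to an identity for $\nabla^gF$ that is exactly the defining formula. Uniqueness of the Einstein connection, given $T$ through \eqref{genconein}, finishes this direction.

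\emph{Main obstacle.} I expect the hardest step to be the bookkeeping of the $\xi$-components in matching the defining formula for almost-nearly cosymplectic manifolds, in particular obtaining the precise coefficients $\tfrac16$ and $\tfrac12$. Since the statement is quoted from \cite{IZ1}, I would follow that computation for these terms rather than rederive them.
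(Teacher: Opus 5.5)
Your reduction to the $f^2$-torsion case is where the proof breaks. With $T$ totally skew-symmetric, \eqref{ein5} becomes $2(\nabla_ZF)(X,Y)=-2T(Z,X,Y)-T(Z,X,fY)+T(Z,Y,fX)$. Its right-hand side is already skew in $(X,Y)$, so comparing skew-symmetries yields no condition at all.

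The $f$-torsion condition is in fact an additional requirement, not a consequence of \eqref{metein0}. For a $3$-form $T$, \eqref{ein6} reads $2(\nabla_Xg)(Y,Z)=T(Z,X,fY)-T(X,Y,fZ)$. The identity $T(X,Y,fZ)=T(Z,X,fY)$ for all $X,Y,Z$ is equivalent to \eqref{E-f-torsion}. So, by Lemma~\ref{L-nabla-g}, \eqref{E-f-torsion} holds if and only if $\nabla g=0$. Your appeals to \eqref{E-cond-E2} and Lemma~\ref{L-KT-1} are circular for the same reason.

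The plan is also inconsistent with the target. If \eqref{E-Q-torsion} held, Theorem~\ref{thm72} would give $\nabla^g\xi=0$, hence $d\eta=0$. The $\tfrac16$- and $\tfrac12$-terms you intend to produce from $\nabla^g\xi$ would then vanish identically, so at best you cover the subclass $\nabla^g\xi=0$. In general, setting $Y=\xi$ in \eqref{2.7^*} with $T$ totally skew gives $2\,g(\nabla^g_X\xi,fZ)=2\,T(\xi,Z,X)-T(\xi,fZ,fX)$. Thus $\nabla^g\xi$ is controlled by $T(\xi,\cdot,\cdot)=-\tfrac13\,dF(\xi,\cdot,\cdot)$, which nothing forces to vanish; this is exactly where the $d\eta$-terms come from. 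Consequently \eqref{eq:TXiZX}, \eqref{fnew} and \eqref{codazzi} are not available in the forward direction.

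In the converse there is a separate error. \eqref{ein6} does not read $\nabla g=0$, and $K$ from \eqref{E-tordfnew} is not skew in its last two arguments unless \eqref{E-f-torsion} holds. What is true is this: take the connection defined by \eqref{genconein} from any $T$ skew in its first two arguments. Its torsion is $T$, and \eqref{ein6} holds identically.

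So in both directions the correct argument runs as follows. First, $T=-\tfrac13\,dF$ by \eqref{ein8}; your use of this and of \eqref{genconein} is fine. Second, the Einstein condition is equivalent to \eqref{ein5}, that is, to \eqref{2.7^*} with $T=-\tfrac13\,dF$. The whole content of the corollary is then to show that this single identity for $\nabla^gF$ is equivalent to the almost-nearly cosymplectic formula. One does this by splitting arguments into $\ker\eta$- and $\xi$-components with \eqref{eq32b}--\eqref{eq32c}. The $\xi$-components give that $\xi$ is Killing and $\nabla^g_\xi\xi=0$, and tie $d\eta$ to $dF(\xi,\cdot,\cdot)$. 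The horizontal part is treated as in the almost Hermitian case and gives the term $\tfrac13\,dF(fX,fY,Z)$.

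The paper does not reprove this; it quotes \cite{IZ1}. But your proposal defers precisely this computation, so as it stands neither direction is established.
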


The next results
explicitly present the special  Einstein connection 
of $(M^{2n+1},f,\xi,\eta,g)$
satisfying the $f^2$-torsion condition.

\begin{theorem}\label{thm72-s}
Let $\nabla$ be a special Einstein connection satisfying the $f^2$-torsion condition \eqref{E-Q-torsion}
on an almost contact metric manifold $(M^{2n+1},f,\xi,\eta,g)$, considered as an NGT space $(M,G=g+F,\nabla)$, where $F(X,Y)=g(X,f Y)$. Then 
\eqref{eq:TXiZX} 
is true,
and the torsion $T$ of $\nabla$ 
is given by  
\begin{equation}\label{spect}
 2\,T(X,Y,Z) = -(\nabla^g_XF)(Y,Z) 
 + (\nabla^g_{fY}F)(fX,Z) 
 +(\nabla^g_{fZ}F)(fX,Y) .
\end{equation}
\end{theorem}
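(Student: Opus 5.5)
The first claim, \eqref{eq:TXiZX}, needs no new work. A special Einstein connection is in particular an Einstein connection, and by assumption it satisfies the $f^2$-torsion condition \eqref{E-Q-torsion}. So Theorem~\ref{thm72} applies directly and gives \eqref{eq:TXiZX}.

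For \eqref{spect}, I first reduce to horizontal arguments. Both sides of \eqref{spect} are tensorial, so I decompose each argument as $X=f^2X'+\eta(X)\xi$ with $X'\perp\xi$. By \eqref{eq:TXiZX}, every term with an argument equal to $\xi$ vanishes on both sides:
\begin{itemize}
\item on the left, $T(\xi,\cdot,\cdot)=T(\cdot,\cdot,\xi)=0$, and skew-symmetry of $T$ in its first two slots covers the middle slot;
\item on the right, $(\nabla^g F)(\xi,\cdot,\cdot)=(\nabla^g F)(\cdot,\cdot,\xi)=0$, skew-symmetry of $F$ covers the middle slot, and $f\xi=0$ handles the terms where $\xi$ appears through $f$.
\end{itemize}
Hence it suffices to prove \eqref{spect} for $X,Y,Z\perp\xi$. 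On such vectors $f^2=-I$, and \eqref{Eq-acm1} gives
\[
2(\nabla^g_XF)(Y,Z)=T(Y,fZ,fX)+T(fY,Z,fX).
\]

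Next I use the special condition \eqref{E-cond-2K}, namely $T(Y,Z,fX)=-T(X,Z,fY)$. Put $S(X,Z,Y):=T(X,Z,fY)$; then \eqref{E-cond-2K} says $S$ is skew in its first and third slots. Applying this to the two terms on the right, and using $f^2Y=-Y$ for horizontal $Y$, I obtain
\begin{equation*}
2(\nabla^g_XF)(Y,Z)=T(X,Z,Y)-T(X,fZ,fY)\qquad(X,Y,Z\perp\xi).
\end{equation*}
This expresses $\nabla^g F$ through $T$ alone, with the first slot of $T$ kept fixed.

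To invert this relation, I evaluate each of the three terms on the right of \eqref{spect} with the displayed formula:
\begin{align*}
-2(\nabla^g_XF)(Y,Z)&=-T(X,Z,Y)+T(X,fZ,fY),\\
2(\nabla^g_{fY}F)(fX,Z)&=T(fY,Z,fX)-T(fY,fZ,f^2X),\\
2(\nabla^g_{fZ}F)(fX,Y)&=T(fZ,Y,fX)-T(fZ,fY,f^2X).
\end{align*}
I then rewrite every term with the same three tools:
\begin{itemize}
\item $f^2=-I$ on horizontal vectors;
\item skew-symmetry $T(A,B,C)=-T(B,A,C)$;
\item the relation $T(A,B,fC)=-T(C,B,fA)$ from \eqref{E-cond-2K}, which moves an $f$ from the third slot onto the first or second argument.
\end{itemize}
The aim is to bring every term to one of the two normal forms $T(X,Y,Z)$ or $T(X,fY,fZ)$, possibly with the arguments permuted. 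The terms of type $T(\cdot,f\cdot,f\cdot)$ should cancel in pairs, and the remaining plain terms should add up to $4T(X,Y,Z)$, which gives \eqref{spect}. If the bookkeeping does not close directly, I will first derive, from the special condition combined with skew-symmetry of $T$, a general identity relating $T(fA,fB,C)$ to $T(A,B,C)$, and then use it as a lemma.

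The main obstacle is exactly this final algebraic step. The terms $T(fY,fZ,X)$ and $T(fZ,Y,fX)$ carry two $f$'s in non-standard positions, and I must check that the available symmetries suffice to reduce them to the normal forms. Everything before that step is a direct consequence of Theorem~\ref{thm72}, \eqref{Eq-acm1} and \eqref{E-cond-2K}.
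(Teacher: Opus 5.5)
Up to the last step your argument is correct and follows the paper's route: reduce to $X,Y,Z\in\ker\eta$ via \eqref{eq:TXiZX}, then combine \eqref{Eq-acm1} with \eqref{E-cond-2K}. Your relation becomes the paper's \eqref{eqB} once you note $T(X,fZ,fY)=T(X,Y,Z)$. Then you evaluate the three terms of \eqref{spect}.

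The gap is the final bookkeeping. No auxiliary lemma can close it, because the target $4T(X,Y,Z)$ is wrong. Use $T(A,B,fC)=-T(C,B,fA)$, skew-symmetry in the first two slots, and $f^2=-I$ on $\ker\eta$. These give $T(X,fZ,fY)=T(X,Y,Z)$, $T(fY,Z,fX)=T(X,Z,Y)$ and $T(fZ,Y,fX)=T(X,Y,Z)$, and the remaining terms satisfy $T(fY,fZ,f^2X)+T(fZ,fY,f^2X)=0$. Adding your three lines therefore gives
\begin{align*}
 &2\bigl[-(\nabla^g_XF)(Y,Z)+(\nabla^g_{fY}F)(fX,Z)+(\nabla^g_{fZ}F)(fX,Y)\bigr]\\
 &\qquad=\bigl(T(X,Y,Z)-T(X,Z,Y)\bigr)+T(X,Z,Y)+T(X,Y,Z)=2\,T(X,Y,Z).
\end{align*}
So the right-hand side of \eqref{spect} equals $T(X,Y,Z)$, and \eqref{spect} as printed would force $T=0$.

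This is a slip in the statement, not in your method. The paper's own final display is
\[
2(\nabla^g_XF)(Y,Z)-2(\nabla^g_{fZ}F)(fX,Y)-2(\nabla^g_{fY}F)(fX,Z)=-2\,T(X,Y,Z).
\]
This is the same identity you obtain, and it yields $T(X,Y,Z)=-(\nabla^g_XF)(Y,Z)+(\nabla^g_{fY}F)(fX,Z)+(\nabla^g_{fZ}F)(fX,Y)$, not \eqref{spect}.

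Two independent checks confirm this. First, if $T$ is totally skew-symmetric, \eqref{ein8} gives $dF=-3T$, hence $T=-\frac13\,dF$. The factor $2$ would instead lead to the value $-\frac16\,dF$ of the subsequent corollary. Second, take $\mathbb R\times S^6$ with the nearly K\"ahler structure on $S^6$. There the Einstein connection with $T(X,Y,Z)=-\frac13\,dF(X,Y,Z)=-(\nabla^g_XF)(Y,Z)\neq0$ is special and satisfies \eqref{E-Q-torsion}. Its right-hand side of \eqref{spect} equals $T(X,Y,Z)$, not $2\,T(X,Y,Z)$.

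So drop the factor $2$; equivalently, aim for $2\,T(X,Y,Z)$ rather than $4\,T(X,Y,Z)$ in your sum. With that correction your computation closes as you anticipated: the $T(fY,fZ,X)$-type terms cancel in pairs, and your reduction to horizontal arguments carries over unchanged.
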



\begin{proof}

Let $X,Y,Z$ be horizontal vector fields ($X,Y,Z\in\ker\eta$). Theorem~\ref{thm72} gives
\begin{equation}\label{eqA}
2(\nabla^g_XF)(Y,Z)=T(Y,fZ,fX)+T(fY,Z,fX).
\end{equation}
Using \eqref{E-cond-2K} (since \(\nabla\) is a special Einstein connection), and \(f^2=-I\) on 
\(\ker\eta\), we~obtain
\begin{align*}
 T(Y,fZ,fX) & =-T(X,fZ,fY)=T(fZ,X,fY)=-T(X,Y,Z), \\
 T(fY,Z,fX) & =-T(X,Z,f^2Y)=T(X,Z,Y).
\end{align*}
Hence \eqref{eqA} becomes
\begin{equation}\label{eqB}
2(\nabla^g_XF)(Y,Z)=-T(X,Y,Z)+T(X,Z,Y).
\end{equation}
Next, replacing \(X\) by \(fZ\), \(Y\) by \(fX\), 
and \(Z\) by \(Y\) in \eqref{eqA}, we get
\begin{align}\label{eqC}
\notag
 2(\nabla^g_{fZ}F)(fX,Y) &=T(fX,fY,f^2Z)+T(f^2X,Y,f^2Z) \\
                         &=-T(fX,fY,Z)+T(X,Y,Z).
\end{align}
Similarly, replacing \(X\) by \(fY\), \(Y\) by \(fX\), 
and \(Z\) by \(Z\) in \eqref{eqA}, we obtain
\begin{equation}\label{eqD}
2(\nabla^g_{fY}F)(fX,Z)=-T(fX,fZ,Y)+T(X,Z,Y).
\end{equation}
Also, applying \eqref{E-cond-2K}, we get
\begin{equation}\label{eqE}
T(fX,fY,Z)+T(fX,fZ,Y)= -T(fY,fX,Z)-T(fZ,fX,Y) =0.
\end{equation}
Subtracting \eqref{eqC} and \eqref{eqD} from \eqref{eqB}, and using \eqref{eqE}, we obtain
\[
\begin{aligned}
&2(\nabla^g_XF)(Y,Z)
-2(\nabla^g_{fZ}F)(fX,Y)
-2(\nabla^g_{fY}F)(fX,Z) \\
&=
\bigl(-T(X,Y,Z)+T(X,Z,Y)\bigr)
-\bigl(-T(fX,fY,Z)+T(X,Y,Z)\bigr) \\
&\qquad
-\bigl(-T(fX,fZ,Y)+T(X,Z,Y)\bigr) \\
&= -2\,T(X,Y,Z)
+T(fX,fY,Z)+T(fX,fZ,Y) 
= -2\,T(X,Y,Z).
\end{aligned}
\]
Thus \eqref{spect} is true
for all horizontal vector fields \(X,Y,Z\). 
If one of the vector fields is $\xi$, both sides vanish, hence the identity \eqref{spect}  holds for arbitrary vector fields on $M$. 
\end{proof}

\begin{corollary}
Under the conditions of Theorem~\ref{thm72-s},
the torsion $T$ 
(of a special Einstein connection $\nabla$)
is totally skew-symmetric if and only if
\eqref{eq:TXiZX} 
is true;
in this case,
\begin{equation}\label{spect2}
T(X,Y,Z)={\color{red}-}\tfrac12(\nabla^g_X F)(Y,Z) ={\color{red}-}\tfrac 16\,dF(X,Y,Z).
\end{equation}
\end{corollary}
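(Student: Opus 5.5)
The implication from total skew-symmetry of $T$ to \eqref{eq:TXiZX} is immediate: Theorem~\ref{thm72-s} already gives \eqref{eq:TXiZX} for every special Einstein connection satisfying \eqref{E-Q-torsion}. So the real work is the formula \eqref{spect2}, together with the converse direction. Throughout, I would reduce to horizontal $X,Y,Z\in\ker\eta$. By \eqref{eq:TXiZX}, every term in \eqref{spect2} vanishes as soon as one argument equals $\xi$, exactly as at the end of the proof of Theorem~\ref{thm72-s}.

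\emph{Step 1 (structure from specialness).} Set $S(X,Y,Z):=T(X,Y,fZ)$. The special condition \eqref{E-cond-2K} reads $S(Y,Z,X)=-S(X,Z,Y)$, so $S$ is skew in its first and third slots. Since $T$ is a torsion, $S$ is also skew in its first two slots. These two transpositions generate $S_3$, so $S$ is totally skew-symmetric. On $\ker\eta$ we have $f^2=-I$, hence $T(X,Y,Z)=-S(X,Y,fZ)$. I would use this to rewrite the two ``$f$-twisted'' terms $(\nabla^g_{fY}F)(fX,Z)$ and $(\nabla^g_{fZ}F)(fX,Y)$ of \eqref{spect} through \eqref{eqA}. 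The target is the identity that their sum equals $T(X,Y,Z)+T(X,Z,Y)$, i.e.\ the part of $T$ symmetric in the last two arguments. This can be read off by comparing \eqref{spect} with \eqref{eqB}: adding the two gives $2T(X,Y,Z)+\bigl(T(X,Y,Z)-T(X,Z,Y)\bigr)$ minus the $\nabla^g F$ terms.

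\emph{Step 2 (the formula).} Assume $T$ is totally skew-symmetric. Then the sum $(\nabla^g_{fY}F)(fX,Z)+(\nabla^g_{fZ}F)(fX,Y)$ in \eqref{spect} is symmetric in $Y,Z$. Since the left side $2T(X,Y,Z)$ and the term $(\nabla^g_XF)(Y,Z)$ are both skew in $Y,Z$, this symmetric part must vanish. Hence \eqref{spect} collapses to $T(X,Y,Z)=-\tfrac12(\nabla^g_XF)(Y,Z)$, which is the first equality in \eqref{spect2}. Consequently $(\nabla^g F)(X,Y,Z)=(\nabla^g_XF)(Y,Z)$ is totally skew, being proportional to $T$. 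Therefore the three terms in the cyclic sum \eqref{E-3.3} coincide, so $dF(X,Y,Z)=3(\nabla^g_XF)(Y,Z)$, and $T=-\tfrac16\,dF$ follows.

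\emph{Step 3 (converse).} Assume \eqref{eq:TXiZX}. I would feed the relations \eqref{Eq-2.4}, namely $(\nabla^g_XF)(fY,fZ)=-(\nabla^g_XF)(Y,Z)$ and $(\nabla^g_XF)(fY,Z)=(\nabla^g_XF)(Y,fZ)$, into \eqref{spect}. Together with the total skew-symmetry of $S$ from Step~1 and \eqref{eqE}, the aim is to show that the part of $T$ symmetric in $Y,Z$ vanishes. Skew-symmetry in the last two slots, combined with skew-symmetry in the first two, then gives total skew-symmetry of $T$.

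The main obstacle is this last cancellation: proving that $(\nabla^g_{fY}F)(fX,Z)+(\nabla^g_{fZ}F)(fX,Y)=0$ follows from \eqref{eq:TXiZX} together with the total skew-symmetry of $S$. My first attempt would be to expand both terms via \eqref{eqA} into $S$-values. I would then use $S$'s full antisymmetry to pair the resulting four terms off against one another, tracking the signs produced by $f^2=-I$ carefully. This sign bookkeeping is also where the numerical constants $\tfrac12$ and $\tfrac16$ in \eqref{spect2} must be checked.
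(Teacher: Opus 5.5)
Your Step~3 cannot be carried out, because the implication it targets is false. As you note yourself, \eqref{eq:TXiZX} is already a conclusion of Theorem~\ref{thm72-s}. It only concerns components involving $\xi$, so it carries no information about $T$ on $\ker\eta$. The cancellation you want is in fact equivalent to the conclusion. Your observation that $T(X,Y,fZ)$ is a $3$-form is correct, and with it \eqref{eqC}, \eqref{eqD}, \eqref{eqE} give, for horizontal $X,Y,Z$,
\[
(\nabla^g_{fY}F)(fX,Z)+(\nabla^g_{fZ}F)(fX,Y)=\tfrac12\bigl(T(X,Y,Z)+T(X,Z,Y)\bigr).
\]
Your Step~1 target is off by this factor $\tfrac12$. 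This sum vanishes exactly when $T$ is totally skew-symmetric.

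A concrete counterexample to the literal converse is $\mathbb R\times\bar M$ with the product structure, where $\bar M$ is a non-K\"ahler $\mathcal W_4$ manifold such as a Hopf manifold. By the Remark containing \eqref{s1}, its Einstein connection is special. The product connection satisfies \eqref{E-Q-torsion} and \eqref{eq:TXiZX}. Yet if $T$ were totally skew-symmetric, \eqref{eqB} would make $\nabla^gF=-T$ totally skew, i.e.\ $\bar M$ nearly K\"ahler, and a nearly K\"ahler $\mathcal W_4$ manifold is K\"ahler. The paper's proof of the converse does not use \eqref{eq:TXiZX} at all. It assumes the formula $T(X,Y,Z)=-\tfrac12(\nabla^g_XF)(Y,Z)$ and reads off skew-symmetry in $Y,Z$. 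That trivial implication is the only converse available.

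For the forward direction, your Step~2 is the paper's argument: the $Y,Z$-symmetric term in \eqref{spect} must vanish, and then one uses the cyclic sum \eqref{E-3.3}. Your remark that $\nabla^gF$ is proportional to $T$, hence totally skew, is a shorter substitute for the paper's substitution $Y\to fY$, $Z\to fZ$ in \eqref{nnnn}.

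Your worry about the constants is justified: they are wrong, and the error sits upstream in \eqref{spect}. The last display in the proof of Theorem~\ref{thm72-s} equates $2(\nabla^g_XF)(Y,Z)-2(\nabla^g_{fZ}F)(fX,Y)-2(\nabla^g_{fY}F)(fX,Z)$ with $-2T(X,Y,Z)$. So the left-hand side of \eqref{spect} should be $T(X,Y,Z)$, not $2T(X,Y,Z)$. Indeed, inserting the identity above and \eqref{eqB} into \eqref{spect} as printed gives $2T=T$.

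The clean route bypasses \eqref{spect}. If $T$ is totally skew-symmetric, \eqref{eqB} immediately gives $(\nabla^g_XF)(Y,Z)=-T(X,Y,Z)$ on $\ker\eta$; both sides vanish when an argument is $\xi$, by \eqref{eq:TXiZX}. Then \eqref{E-3.3}, or directly \eqref{ein8} (which gives $dF=-3T$ for every totally skew torsion), yields $T=-\tfrac13\,dF$, in agreement with \eqref{Eq-phi-T0-skew}. With the printed constants $-\tfrac12$ and $-\tfrac16$, \eqref{ein8} would force $T=0$.
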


\begin{proof}
Assume that the torsion $T$ is totally skew-symmetric.
Then, by Theorem~\ref{thm72-s}, we have \eqref{eq:TXiZX}
and
\eqref{spect}. Since $T$ is totally skew-symmetric,  
using \eqref{spect}, we get
\begin{equation}\label{nnnn}
(\nabla^g_{fZ}F)(fX,Y)+(\nabla^g_{fY}F)(fX,Z)=0.
\end{equation}
Using this in \eqref{spect}, we obtain
the first equality in \eqref{spect2}:
$T(X,Y,Z)=-\tfrac12(\nabla^g_XF)(Y,Z)$.

Replacing $Y$ by $fY$ and $Z$ by $fZ$ in \eqref{nnnn},
and using $f^2=-I$ on $\ker\eta$, together with
\[
(\nabla^g_XF)(fY,fZ)=-(\nabla^g_XF)(Y,Z),
\]
we obtain
\[
(\nabla^g_XF)(Y,Z)=(\nabla^g_YF)(Z,X)=(\nabla^g_ZF)(X,Y).
\]
Therefore,
\[
dF(X,Y,Z)
=(\nabla^g_XF)(Y,Z)+(\nabla^g_YF)(Z,X)+(\nabla^g_ZF)(X,Y)
=3(\nabla^g_XF)(Y,Z);
\]
hence 
\[
T(X,Y,Z)=-\tfrac12(\nabla^g_XF)(Y,Z)
=-\tfrac16\,dF(X,Y,Z).
\]
that completes the proof of \eqref{spect2}.
Conversely, assume that
\[
T(X,Y,Z)=-\tfrac12(\nabla^g_XF)(Y,Z).
\]
Since $(\nabla^g_XF)(Y,Z)$ is skew-symmetric in $Y$ and $Z$, it follows
 that $T$ is skew-symmetric in $Y$ and $Z$; therefore, $T$ is totally skew-symmetric.
\end{proof}

\section{Einstein connection of an NGT space}
\label{sec:w-Herm}

In this section, we  explicitly present the Einstein connection of a nonsymmetric pseudo-Riemannian space $(M,G=g+F)$ with non-degenerate $F$, in particular, of a weak almost Hermitian manifold.
Recall that a pseudo-Riemannian manifold $(M^{2n},g)$ endowed with a skew-symmetric (1,1)-tensor $f$ of maxi\-mal rank $2n$ ($f^2$ is not necessarily equal to $-{I}$) is called a \emph{weak almost Hermitian manifold} \cite{rov-survey24}.
In this case, we introduce a new (1,1)-tensor $\widetilde Q=-f^2-{I}$, and the $f^2$-\textit{torsion condition} \eqref{E-Q-torsion} in terms of 
$\widetilde Q$~reads
\begin{align}\label{E-tildeQ-torsion}
 T(\widetilde QX, Y,Z) = T(X,\widetilde QY,Z) = T(X,Y,\widetilde QZ) .
\end{align} 

The following lemma is used 
in the proof of Theorem~\ref{T-8.2}.

\begin{lemma}
Let $(M^{2n},f,g)$ be a weak almost Hermitian manifold, considered as a nonsymmetric pseudo-Riemannian manifold $(M,G=g+F)$, where $F(X,Y)=g(X,fY)$.
If~an Einstein connection $\nabla$ on $M$ satisfies the $f^2$-torsion condition \eqref{E-Q-torsion},
then
\begin{align}
\label{E-fy-fz2}
\notag
T(fY,fZ,({I}{-}f^2)X) =&\, T(Y,Z,({I}{-}f^2)f^2X)
{-} 2(\nabla^g_{f^2X}\,F)(Y,fZ) {+}2(\nabla^g_{f^2X}\,F)(fY,Z) \\
& +dF(Y,Z,\, ({I}-f^2)f^2X) -dF(fY,fZ,\, ({I}-f^2)X) , \\
\label{E-fy-fz3}
\notag
 T(fY,Z,\, ({I}-f^2)fX) =&\, -T(Y,fZ,\, ({I}-f^2)fX)
 -T(Y,Z,\, ({I}-f^4)X) \\
\notag 
 &+dF(Y,Z,\, ({I}-f^4)X)
  +2\,(\nabla^g_{({I}-f^2)X}\,F)(Y,Z) \\
 &-2\,(\nabla^g_{f^2X}\,F)(fY,Z) +2\,(\nabla^g_{f^2X}\,F)(Y,fZ) .
\end{align} 
\end{lemma}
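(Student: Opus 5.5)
The plan is to rewrite \eqref{2.7^*} in a form where the torsion always appears with the ``free'' slot last. Then I will substitute suitably modified arguments and use the $f^2$-torsion condition \eqref{E-Q-torsion} to move powers of $f^2$ between slots.

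\emph{Step 1: a normal form of \eqref{2.7^*}.} By \eqref{ein8} we have $-T(Z,X,Y)-T(X,Y,Z)=dF(X,Y,Z)+T(Y,Z,X)$. Likewise, with $Y\to fY$ and $Z\to fZ$, we get $-T(fZ,X,fY)-T(X,fY,fZ)=dF(X,fY,fZ)+T(fY,fZ,X)$. Substituting both into \eqref{2.7^*} should give
\begin{align*}
 2(\nabla^g_X F)(Y,Z) =&\ dF(X,Y,Z)+dF(X,fY,fZ)+T(Y,Z,X)+T(fY,fZ,X)\\
 &+T(Y,fZ,fX)+T(fY,Z,fX). \qquad (\ast)
\end{align*}
Here all torsion terms have the pattern $T(\,\cdot\,,\,\cdot\,,X)$ or $T(\,\cdot\,,\,\cdot\,,fX)$. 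Since $dF$ inherits the $f^2$-mobility (as noted after \eqref{E-Q-torsion}), and $dF$ is totally skew, the $dF$ terms can be written as $dF(Y,Z,X)$ and $dF(fY,fZ,X)$.

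\emph{Step 2: specializations.} I will write $(\ast)$ three times.
\begin{enumerate}
\item[(a)] With $X\to f^2X$ and $(Y,Z)\to(Y,fZ)$.
\item[(b)] With $X\to f^2X$ and $(Y,Z)\to(fY,Z)$.
\item[(c)] With $X\to X$ or $X\to(I-f^2)X$ and $(Y,Z)$ unchanged.
\end{enumerate}
In each, I use \eqref{E-Q-torsion} to convert every factor $f^2$ standing in the $Y$- or $Z$-slot into an $f^2$ acting on the last slot. For example, $T(fY,f^2Z,f^2X)=T(fY,Z,f^4X)$ and $T(f^2Y,fZ,fX)=T(Y,fZ,f^3X)$. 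After this, every term has one of the four shapes $T(Y,Z,\cdot)$, $T(fY,fZ,\cdot)$, $T(fY,Z,\cdot)$, $T(Y,fZ,\cdot)$, with the last argument a polynomial in $f$ applied to $X$.

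\emph{Step 3: elimination.} Subtracting (b) from (a) kills the ``mixed'' terms $T(fY,Z,\cdot)$ and $T(Y,fZ,\cdot)$ up to terms carrying the factor $(I-f^2)$. What remains is a linear relation between $T(fY,fZ,(I-f^2)X)$ and $T(Y,Z,(I-f^2)f^2X)$, together with the $\nabla^g_{f^2X}F$ terms and $dF$ terms; I expect this to be exactly \eqref{E-fy-fz2}.

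For \eqref{E-fy-fz3}, I will instead take (c) with $X\to(I-f^2)X$. This produces the term $2(\nabla^g_{(I-f^2)X}F)(Y,Z)$. I then add (a) and subtract (b), or use \eqref{E-fy-fz2}, to eliminate the $T(fY,fZ,\cdot)$ term. This should leave the mixed terms $T(fY,Z,(I-f^2)fX)+T(Y,fZ,(I-f^2)fX)$ expressed through $T(Y,Z,(I-f^4)X)$, $dF(Y,Z,(I-f^4)X)$ and the three $\nabla^g F$ terms.

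The main obstacle is purely the bookkeeping in Step 3. One must choose the right linear combination so that the polynomial factors in $f$ combine into $(I-f^2)$, $(I-f^2)f^2$ and $(I-f^4)=(I-f^2)(I+f^2)$ exactly as in the statement, and check the signs coming from the skew-symmetry of $f$ and of $dF$. I would first verify the combination in the almost Hermitian case $f^2=-I$, where $I-f^2=2I$ and $I-f^4=0$. There both identities must reduce to consequences of \eqref{Eq-2.9}, which fixes the signs. After that I would carry out the general computation.
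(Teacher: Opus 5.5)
Your normal form $(\ast)$ is correct, and your overall route is the paper's: subtract the $Z\to fZ$ and $Y\to fY$ instances of \eqref{2.7^*}, move $f^2$ using \eqref{E-Q-torsion}, then feed the result back into \eqref{2.7^*} at $X\to(I-f^2)X$. But Step~3 gets the cancellation backwards. Write $P=I-f^2$. The torsion terms of (a) are $T(Y,fZ,f^2X)$, $T(fY,Z,f^4X)$, $T(Y,Z,f^5X)$ and $T(fY,fZ,f^3X)$. Those of (b) are $T(fY,Z,f^2X)$, $T(Y,fZ,f^4X)$, $T(fY,fZ,f^3X)$ and $T(Y,Z,f^5X)$. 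So (a)$-$(b) removes the diagonal terms identically and leaves only mixed ones:
\begin{align*}
2(\nabla^g_{f^2X}F)(Y,fZ)-2(\nabla^g_{f^2X}F)(fY,Z) =&\ T(Y,fZ,f^2PX)-T(fY,Z,f^2PX)\\
&+dF(Y,fZ,f^2PX)-dF(fY,Z,f^2PX).
\end{align*}
This is the paper's intermediate identity \eqref{E-fy-fz1} with $X\to f^2X$, not \eqref{E-fy-fz2}. It contains no $T(fY,fZ,\cdot)$, so it also cannot do the elimination you plan for \eqref{E-fy-fz3}. The missing step is the one the paper takes next: substitute $Z\to fZ$ in \eqref{E-fy-fz1}. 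Equivalently, subtract $(\ast)$ at $(Y,Z)\to(Y,f^2Z)$ from $(\ast)$ at $(Y,Z)\to(fY,fZ)$. Then $T(fY,Z,f^3X)$ and $T(Y,fZ,f^3X)$ cancel, and you get
\begin{align*}
T(fY,fZ,PX) =&\ T(Y,Z,Pf^2X)+2(\nabla^g_XF)(fY,fZ)-2(\nabla^g_XF)(Y,f^2Z)\\
&+dF(Y,Z,Pf^2X)-dF(fY,fZ,PX).
\end{align*}

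Even done correctly, this will not reproduce the printed formulas verbatim. The $\nabla^gF$-pair printed in \eqref{E-fy-fz2} is $-2(\nabla^g_{f^2X}F)(Y,fZ)+2(\nabla^g_{f^2X}F)(fY,Z)=-2g\bigl(Y,(\nabla^g_{f^2X}f^2)Z\bigr)$. That is symmetric in $(Y,Z)$, while every other term of \eqref{E-fy-fz2} is skew in $(Y,Z)$. So no bookkeeping can produce it unless $\nabla^g f^2=0$, which is not assumed. The pair in the display above equals $-2g\bigl(Y,(\nabla^g_Xf^2)fZ\bigr)$ and is skew. Similarly, inserting the display into $(\ast)$ at $X\to PX$ gives \eqref{E-fy-fz3} with two changes: $-dF(Y,Z,(I-f^4)X)$ in place of $+dF(Y,Z,(I-f^4)X)$, and last two terms $+2(\nabla^g_XF)(Y,f^2Z)-2(\nabla^g_XF)(fY,fZ)$. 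Your planned calibration in the almost Hermitian case cannot detect either discrepancy, because there $I-f^4=0$ and both $\nabla^gF$-pairs vanish. Better tests:
\begin{itemize}
\item Take $f^2=-\lambda I$ with constant $\lambda\ne1$. This already exposes the $dF$ sign: $(\ast)$ directly gives $\lambda\,[T(JY,Z,JX)+T(Y,JZ,JX)]=2(\nabla^g_XF)(Y,Z)-(1-\lambda)\,[T(Y,Z,X)+dF(Y,Z,X)]$.
\item Take $f^2=-\lambda I$ with nonconstant $\lambda$. Then the correct pair becomes $2(X\lambda)F(Y,Z)$, while the printed one becomes $-2\lambda(X\lambda)g(Y,Z)$.
\end{itemize}
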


\begin{proof}
Using $Z\to fZ$ in \eqref{2.7^*}
and $\widetilde Q=-f^2-{I}$, we get
\begin{align*}
 2(\nabla^g_{X}\,F)(Y,fZ)
 = &-T(fZ,X,Y) -T(X,Y,fZ) +T(Z,X,fY) +T(\widetilde QZ,X,fY) \\
 & +T(X,fY,Z) +T(X,fY,\widetilde QZ) 
 -T(Y,Z,fX) -T(Y,\widetilde QZ,fX) \\
 & +T(fY,fZ,fX) . 
\end{align*} 
Using $Y\to fY$ in \eqref{2.7^*}, we get
\begin{align*}
 2(\nabla^g_{X}\,F)(fY,Z)
 =& -T(Z,X,fY) -T(X,fY,Z) +T(fZ,X,Y) +T(fZ,X,\widetilde QY) \\
 & +T(X,Y,fZ) +T(X,\widetilde QY,fZ) 
 -T(Y,Z,fX)  -T(\widetilde QY,Z,fX) \\
 & +T(fY,fZ,fX) . 
\end{align*} 
Subtracting the above equations and using 
the $f^2$-torsion condition \eqref{E-Q-torsion}, we get
\begin{align*} 
 & 2\{T(Z,X,fY) +T(X,fY,Z)\}
 +\{T(Z,\widetilde QX,fY) +T(\widetilde QX,fY,Z)\} \\
 &-2\{T(X,Y,fZ) +T(fZ,X,Y)\}
 -\{T(\widetilde QX,Y,fZ) +T(fZ,\widetilde QX,Y)\} \\
 & =  2(\nabla^g_{X}\,F)(Y,fZ) - 2(\nabla^g_{X}\,F)(fY,Z). 
\end{align*} 
Applying \eqref{ein8} to the above equation and using the $f^2$-torsion condition \eqref{E-Q-torsion}, we get 
\begin{align}\label{E-fy-fz1}
\notag
 T(fY,Z,\, ({I}-f^2)X) =&\, T(Y,fZ,\, ({I}-f^2)X)
 - 2(\nabla^g_{X}\,F)(Y,fZ) + 2(\nabla^g_{X}\,F)(fY,Z) \\
 & +dF(Y,fZ,\, ({I}-f^2)X) -dF(fY,Z,\, ({I}-f^2)X) .
\end{align} 
Applying $Z\to fZ$ to \eqref{E-fy-fz1} gives~\eqref{E-fy-fz2}.
Using \eqref{ein8} and \eqref{E-fy-fz2}, we rewrite \eqref{2.7^*} as~\eqref{E-fy-fz3}.
\end{proof}

\begin{remark}\rm
For $f=J$, \eqref{E-fy-fz2} and \eqref{E-fy-fz3} reduce to the following:
\begin{align*}
T(JY,JZ,X) =&\, -T(Y,Z,X)
 -dF(Y,Z,X) -dF(JY,JZ,X) , \\
 T(JY,Z,JX) =&\, -T(Y,JZ,JX)
 {+}2\,(\nabla^g_{X}\,F)(Y,Z) 
 {+}(\nabla^g_{X}\,F)(JY,Z) {-}(\nabla^g_{X}\,F)(Y,JZ) .
\end{align*} 
From \eqref{E-fy-fz2} with $f=J$ we get 
\begin{align*}
 T(JY,JZ,X) =&\, -T(Y,Z,X) -dF(Y,Z,X) -dF(JY,JZ,X) \\
 &+ (\nabla^g_{X}\,F)(Y,JZ) -(\nabla^g_{X}\,F)(JY,Z) .
\end{align*} 
Subtracting the above equation from \eqref{Eq-2.12}, we get a compact presentation of \eqref{Eq-2.13}:
\begin{align*}
2\,T(Y,Z,X) = (\nabla^g_{2\,JX-X}\,F)(JY,Z) 
+ (\nabla^g_{X}\,F)(Y,JZ) 
-dF(Y,Z,X) -dF(JY,JZ,X)  .
\end{align*} 
\end{remark}

The following theorem generalizes  \cite[Theorem 1]{Prvanovic-95},
and for a non-degenerate tensor $P={I}-f^2$ we can use it to completely determine~$T$, and hence, an Einstein connection $\nabla$.

\begin{theorem}\label{T-8.2}
Let $(M^{2n},f,g)$ be a weak almost Hermitian manifold, considered as a nonsymmetric pseudo-Riemannian manifold $(M,G=g+F)$, where $F(X,Y)=g(X,fY)$.
Suppose that an Einstein connection $\nabla$ on $M$ satisfies the $f^2$-torsion condition \eqref{E-Q-torsion},
and the tensor $P:={I}-f^2$ has rank $2n$. 
Then the torsion $T$ of $\nabla$ is given~by
\begin{align}\label{Eq-QT-solution}
\notag
 & 2\,T(Y,Z,\,(I+\tfrac12\,\widetilde Q)^2f^4 X) = 
 (\nabla^g_X F)((f+f^3)Y,fZ) 
 +(\nabla^g_Y F)(f^2Z,X) 
 +(\nabla^g_Z F)(f^2X,Y) \\
\notag
&\qquad +(\nabla^g_{fX} F)(f^3Y,Z) 
-(\nabla^g_{fX} F)(f^2Y,fZ) 
+(\nabla^g_{fY} F)(f^3Z,X)
+(\nabla^g_{fZ} F)(f^2X,fY) \\
\notag 
&\qquad+(\nabla^g F)(P^{-1}(2\,\widetilde Q+\widetilde Q^2)f^2X,fY,Z)
-(\nabla^g F)(P^{-1}(2\,\widetilde Q+\widetilde Q^2)f^2X,Y,fZ) \\
&\qquad-(\nabla^g F)(\widetilde Qf^2X,Y,Z) 
-\tfrac12\,dF((3\,\widetilde Q+2\,\widetilde Q^2)X,fY,fZ) 
-\tfrac32\,dF(\widetilde Q f^4 X,Y,Z) .
\end{align}
\end{theorem}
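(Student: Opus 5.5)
The plan is to mimic the almost Hermitian computation of Theorem~\ref{thm72} (equations \eqref{Eq-2.9}--\eqref{Eq-2.12}). The difference is that every use of $f^2=-I$ is replaced by bookkeeping with $f^2=-I-\widetilde Q$. The $f^2$-torsion condition \eqref{E-Q-torsion}, in the form \eqref{E-tildeQ-torsion}, lets us move any polynomial in $f^2$ (equivalently in $\widetilde Q$) freely between the three slots of $T$ and of $dF$. It does the same for $\nabla^g F$, by the consequences stated right after the definition of the $f^2$-torsion condition. Since $f$ commutes with $f^2$, a polynomial in $f^2$ can also be moved across an $f$. Thus all such operators can be collected on the last slot $X$ of $T(Y,Z,\cdot)$. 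The goal is to isolate a single term $T(Y,Z,\Phi X)$ with $\Phi$ a polynomial in $f^2$, and to express everything else through $\nabla^g F$ and $dF$.

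The key steps are as follows.

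First, take the identity \eqref{E-fy-fz3} of the preceding Lemma, which expresses $T(fY,Z,PfX)+T(Y,fZ,PfX)$ through $T(Y,Z,(I-f^4)X)$ and derivative terms.

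Second, substitute $Y\to fY$ in \eqref{E-fy-fz3}. The first torsion term becomes $T(f^2Y,Z,\cdot)=T(Y,Z,f^2\,\cdot)$, and a new term $T(fY,fZ,\cdot)$ appears.

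Third, eliminate $T(fY,fZ,\cdot)$ using \eqref{E-fy-fz2}, which expresses $T(fY,fZ,PX)$ through $T(Y,Z,Pf^2X)$ plus $\nabla^g F$ and $dF$ terms. Because $P$ is invertible, any argument $W$ in the last slot can be written as $P(P^{-1}W)$. This is the source of the $P^{-1}(2\widetilde Q+\widetilde Q^2)f^2X$ terms in \eqref{Eq-QT-solution}.

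Fourth, use \eqref{E-fy-fz3} itself, or equivalently \eqref{2.7^*} combined with \eqref{ein8}, to eliminate the remaining mixed term $T(Y,fZ,\cdot)$. Its counterpart is obtained from \eqref{E-fy-fz3} by symmetry of the roles of $Y$ and $Z$, using the skew-symmetry of $T$ in its first two arguments.

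After these eliminations, all torsion terms combine into $T(Y,Z,\Phi X)$. Computing $\Phi$ with $f^2=-I-\widetilde Q$ should give $2(I+\tfrac12\widetilde Q)^2f^4$. As a check, for $\widetilde Q=0$ the identity must reduce to \eqref{Eq-2.13}, or equivalently to \eqref{Eq-phi-T0}.

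The remaining work is to simplify the right-hand side. Rewrite each factor $(I-f^2)$ and $(I-f^4)$ as $2I+\widetilde Q$ and $-(2\widetilde Q+\widetilde Q^2)$, respectively. Also use $dF=-\sum_{\rm cyc}T$ from \eqref{ein8} wherever a cyclic torsion sum appears. The derivative terms should then collapse to those listed: the $(\nabla^g_X F)((f+f^3)Y,fZ)$ term, the cyclic $\nabla^g_Y,\nabla^g_Z$ terms, the $\nabla^g_{fX}$, $\nabla^g_{fY}$, $\nabla^g_{fZ}$ terms (produced by the substitutions $X\to fX$ needed to align slots), and the two $dF$ corrections carrying $\widetilde Q$.

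The main obstacle is the fourth step together with this final collection. Keeping track of which polynomial in $\widetilde Q$ ends up on which argument is delicate, and verifying that the coefficient of $T(Y,Z,\cdot)$ factors exactly as $(I+\tfrac12\widetilde Q)^2f^4$ is not automatic. My first approach would be to work in a pointwise eigenbasis of the symmetric operator $\widetilde Q$, where $f^2$ acts as $-(1+q)$, so that every operator becomes a scalar polynomial in $q$. In that form the coefficient identity can be checked directly before the general tensorial bookkeeping is written out.

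Finally, when $(I+\tfrac12\widetilde Q)^2f^4$ is invertible (which holds since $f$ is nondegenerate and $P$ is nondegenerate), this identity determines $T$, and hence $\nabla$ through \eqref{genconein}.
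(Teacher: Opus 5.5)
There is a genuine gap, and it sits exactly at the step you postpone: the assertion that after the eliminations everything collapses to $T(Y,Z,\Phi X)$ with $\Phi=2(I+\tfrac12\widetilde Q)^2f^4$. Run your own eigenbasis check. On an eigenspace where $f^2=-\mu I$, write $f=\sqrt{\mu}\,J$ with $J^2=-I$. The torsion part of \eqref{E-fy-fz3} is then $(1+\mu)\big\{\mu\,[T(JY,Z,JX)+T(Y,JZ,JX)]+(1-\mu)\,T(Y,Z,X)\big\}$. It contains only the \emph{sum} of the two mixed terms and is antisymmetric under $Y\leftrightarrow Z$. So the ``counterpart by symmetry'' in your step 4 gives nothing new; the difference $T(fY,Z,\cdot)-T(Y,fZ,\cdot)$ has to come from \eqref{E-fy-fz1}. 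After your steps 1--3, the torsion terms that remain are $\sqrt{\mu}\,(1+\mu)\,[-2\mu\,T(Y,Z,JX)+(1-\mu)\,T(JY,Z,X)]$; the leftover mixed term is $T(fY,Z,(I-f^4)X)$. Eliminating $T(JY,Z,X)$ with \eqref{E-fy-fz3} at $fX$ and with \eqref{E-fy-fz1} leaves $\frac{(1+\mu)^2(1-3\mu)}{2\sqrt{\mu}}\,T(Y,Z,JX)$. Up to invertible factors, the operator in front is therefore $P^2(I+3f^2)$, not $P^2f^4$, and it degenerates at $\mu=\tfrac13$. Your check at $\widetilde Q=0$ cannot detect this, because $\mu=1$ is a regular value.

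This is not a bookkeeping artifact: the statement itself fails there. Take flat $\mathbb{R}^4=\mathbb{C}^2$ with $z_k=x^k+iy^k$, $J\partial_{x^k}=\partial_{y^k}$, and $f=J/\sqrt3$. Then $\nabla^gF=0$, $dF=0$, $P=\tfrac43 I$, and the $f^2$-torsion condition holds automatically since $f^2=-\tfrac13 I$. Let $T(X,Y,Z)=\mathrm{Re}\big[(dz_1\wedge dz_2)(X,Y)\,dz_1(Z)\big]$; for example, $T(\partial_{x^1},\partial_{x^2},\partial_{x^1})=1$. This $T$ satisfies $T(JX,Y,Z)=T(X,JY,Z)=T(X,Y,JZ)$ and has zero cyclic sum, so the right side of \eqref{2.7^*} equals $(1-3\mu)\,T(Y,Z,X)=0$. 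Given \eqref{E-tordfnew} and \eqref{E1-2.22}, condition \eqref{metein0} is equivalent to \eqref{2.7^*}. Hence $\nabla^g+K$, with $K$ from \eqref{E-tordfnew}, is an Einstein connection with torsion $T\neq0$; a direct component check of \eqref{metein0} confirms this. Yet the right side of \eqref{Eq-QT-solution} vanishes, while its left side is $\tfrac{8}{81}T(Y,Z,X)$. So no route can establish the displayed formula: an extra hypothesis such as invertibility of $I+3f^2$ is needed, and the coefficient must change. The paper argues differently, via Prvanovi\'c's cyclic-sum scheme, using the Lemma only at the end to simplify $\delta_5$, but it reaches the same coefficient. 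Note also that the printed derivative terms of \eqref{E-fy-fz2} and \eqref{E-fy-fz3} are not correct once $f^2\neq-I$. Substituting $Z\to fZ$ in \eqref{E-fy-fz1} produces $-2(\nabla^g_{f^2X}F)(Y,Z)+2(\nabla^g_XF)(fY,fZ)$, and the term $dF(Y,Z,(I-f^4)X)$ in \eqref{E-fy-fz3} should carry a minus sign. Quoting them verbatim would corrupt your right-hand side as well.
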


\begin{proof} 
Applying $Y\to fY$ and $Z\to fZ$ to \eqref{2.7^*}, we have
\begin{align}\label{2.7^{***}}
\notag
& 2(\nabla^g_X F)(fY,fZ) = -T(Z,X,Y) -T(X,Y,Z) \\
\notag
&\qquad -T(fZ, X,fY) -T(X,fY,fZ) -T(fY,Z,fX)-T(Y,fZ,fX) \\
\notag
&\qquad -T(\widetilde QZ,X,Y) -T(Z,X,\widetilde QY) 
 -T(X,\widetilde QY,Z) -T(X,Y,\widetilde QZ) \\
&\qquad -T(\widetilde QZ,X,\widetilde QY) 
-T(X,\widetilde QY,\widetilde QZ) 
-T(fY,\widetilde QZ,fX) -T(\widetilde QY,fZ,fX) . 
\end{align} 
From \eqref{2.7^*} with $Y\to -f^2Y$,
we get 
\begin{align}\label{2.7^{**}}
\notag
&-2(\nabla^g_X\,F)(f^2Y,Z) = -T(Z,X,Y) -T(X,Y,Z) 
-T(fZ,X,fY) -T(X,fY,fZ)\\
\notag
&\qquad +T(Y,fZ,fX) +T(fY,Z,fX) 
-T(Z,X,\widetilde QY) -T(X,\widetilde QY,Z)\\
&\qquad 
 -T(fZ,X,\widetilde QfY)  
-T(X,\widetilde QfY,fZ)
 +T(\widetilde QY,fZ,fX) +T(\widetilde QfY,Z,fX) . 
\end{align} 
The relation \eqref{2.7^{***}} together with \eqref{2.7^{**}} yields the following:
\ 
\begin{align}\label{2.8^*}
\notag
&-T(X,fY,fZ) -T(fZ,X,fY) = T(X,Y,Z) +T(Z,X,Y) +T(Z,X,\widetilde QY) \\
\notag
&+T(X,\widetilde QY,Z) 
+\tfrac12\big\{ 
T(fZ,X,\widetilde QfY)+T(X,\widetilde QfY,fZ) 
 +T(\widetilde QZ,X,Y)  \\
\notag 
&+T(\widetilde QZ,X,\widetilde QY) +T(X,Y,\widetilde QZ)
+T(X, \widetilde QY,\widetilde QZ) -T(\widetilde QfY,Z,fX) \\
&+T(fY,\widetilde QZ,fX) \big\}
 +(\nabla^g_X F)(fY,fZ) -(\nabla^g_X\,F)(f^2Y,Z) .
\end{align} 
Using \eqref{2.8^*}, we reduce \eqref{2.7^{**}} to 
the following: 
\begin{align}\label{2.9^*}
\notag
&-(\nabla^g_X\,F)(f^2Y,Z) {-}(\nabla^g_X\,F)(fY,fZ)
= T(Y,fZ,fX) {+}T(fY,Z,fX) {+}T(\widetilde QY,fZ,fX) \\
\notag
&\quad + \tfrac12\big\{T(\widetilde QZ,X,\widetilde QY)
-T(fZ,X, \widetilde Q fY) -T(X, \widetilde QfY,fZ)  
+T(\widetilde QZ,X,Y)  \\
&\quad +T(X,Y,\widetilde QZ) +T(X,\widetilde QY,\widetilde QZ) 
+T(\widetilde QfY,Z, fX) 
+T(fY,\widetilde Q Z,fX) \big\}.
\end{align} 
Taking the cyclic sum of \eqref{2.9^*}, 
in view of \eqref{2.8^*}, we get: 
\begin{align}\label{2.10^*}
\notag
&
-(\nabla^g_X\,F)(f^2Y,Z) 
-(\nabla^g_Y\,F)(f^2Z,X) -(\nabla^g_Z\,F)(f^2X,Y) \\
&\qquad = -T(Y,Z,X) -T(X,Y,Z) -T(Z,X,Y) +\tfrac12\,\delta_1(X,Y,Z) ,
\end{align} 
where $\delta_1(X,Y,Z)$ is given in Section~\ref{sec:app}.

From \eqref{2.10^*} with $Y\to fY$ and $Z\to fZ$,
using \eqref{2.8^*}, we obtain 
\begin{align}\label{2.11^*}
\notag
&-(\nabla^g_X\,F)(f^3Y,fZ) -(\nabla^g_{fY}\,F)(f^3Z,X) 
-(\nabla^g_{fZ}\,F)(f^2X,fY)\\
\notag
& +(\nabla^g_X\,F)(f^2Y,Z) -(\nabla^g_{X}\,F)(fY,fZ) \\
&\quad = T(X,Y,Z) +T(Z,X,Y) -T(fY,fZ,X) +\tfrac12\,\delta_2(X,Y,Z), 
\end{align} 
where $\delta_2(X,Y,Z)$ is given in Section~\ref{sec:app}.
Replacing $X\to fX$ and $Y\to fY$ in \eqref{2.9^*}, 
we get: 
\begin{align}\label{2.12^*}
\notag
& T(fY,fZ,X) = T(Y,Z,X) +(\nabla^g_{fX}\,F)(f^2Y,fZ) +(\nabla^g_{fX}\,F)(f^3Y,Z) 
\\
\notag
&+\tfrac12\,T(fZ,fX,\widetilde QY) +\tfrac12\,T(fZ,fX,\widetilde Q^2Y) +\tfrac12\,T(fX,\widetilde QY,fZ) +\tfrac12\,T(fX,\widetilde Q^2Y,fZ) \\ 
\notag
&+\tfrac32\,T(\widetilde QY,Z,X) +\tfrac12\,T(\widetilde Q^2Y,Z,X) +\tfrac32\,T(\widetilde QY,Z,\widetilde QX) +\tfrac12\,T(\widetilde Q^2Y,Z,\widetilde QX) \\
\notag
&+\tfrac12\,T(\widetilde QZ,fX,fY) +\tfrac12\,T(\widetilde QZ,fX,\widetilde QfY) +\tfrac12\,T(fX,fY,\widetilde QZ) +\tfrac12\,T(fX,\widetilde QfY,\widetilde QZ) \\ 
\notag
&+\tfrac12\,T(Y,\widetilde QZ,X) +\tfrac12\,T(\widetilde QY,\widetilde QZ,X) +\tfrac12\,T(Y,\widetilde QZ,\widetilde QX) +\tfrac12\,T(\widetilde QY,\widetilde QZ,\widetilde QX) \\
&-T(fY,fZ,\widetilde QX) -T(\widetilde QfY,fZ,X) -T(\widetilde QfY,fZ,\widetilde QX) +T(Y,Z,\widetilde QX) . 
\end{align} 
Then substituting \eqref{2.12^*} into \eqref{2.11^*}, 
we get the following:  
\begin{align*}
& (\nabla^g_X F)(f^2Y, Z) -(\nabla^g_X F)((f+f^3)Y, fZ) 
-(\nabla^g_{fY} F) (f^3Z, X)  \\
&-(\nabla^g_{fZ} F)(f^2X, fY) 
+(\nabla^g_{fX} F)(f^3Y, Z) +(\nabla^g_{fX} F)(f^2Y, fZ) \\
&\quad = 
T(X,Y,Z) +T(Z,X,Y) -T(Y,Z,X) +\tfrac12\,\delta_3(X,Y,Z) ,
\end{align*} 
where $\delta_3(X,Y,Z)$ is given in Section~\ref{sec:app}.
This relation together with \eqref{2.10^*} yields 
 \begin{align}\label{E-Q-fy-fz}
\notag
 2\,T(Y,Z,X) =&\ (\nabla^g_X F)((f+f^3)Y,fZ) +(\nabla^g_Y F)(f^2Z,X) 
+(\nabla^g_Z F)(f^2X,Y) \\
\notag
&-(\nabla^g_{fX} F)(f^3Y,Z) -(\nabla^g_{fX} F)(f^2Y,fZ)
+(\nabla^g_{fY} F)(f^3Z,X) 
\\
& +(\nabla^g_{fZ} F)(f^2X,fY) +\tfrac12\,\delta_4(X,Y,Z) ,
\end{align}
where $\delta_4(X,Y,Z)$ is given in Section~\ref{sec:app}.
Using \eqref{ein8},
$-f^2{=}I+\widetilde Q$,
and the $f^2$-torsion condition \eqref{E-tildeQ-torsion}, we reduce $\delta_4(X,Y,Z)$ to $\delta_5(X,Y,Z)$, given in Section~\ref{sec:app}. 
Using \eqref{E-fy-fz2} and \eqref{E-fy-fz3}, we~get
\begin{align*}
 & T(fY,fZ,(3\,\widetilde Q +2\,\widetilde Q^2)X) 
 = T(Y,Z, (3\,\widetilde Q+2\,\widetilde Q^2)f^2X) 
 +dF((3\,\widetilde Q+2\,\widetilde Q^2)f^2X,Y,Z) \\
 &\qquad -dF((3\,\widetilde Q+2\,\widetilde Q^2)X,fY,fZ)
 -2\,(\nabla^g F)(P^{-1}(3\,\widetilde Q+2\,\widetilde Q^2)f^2X,Y,fZ) \\
 &\qquad +2\,(\nabla^g F)(P^{-1}(3\,\widetilde Q+2\,\widetilde Q^2)f^2X,fY,Z), \\
 & T(Y,fZ,\widetilde Qf^3X) +T(fY,Z,\widetilde Qf^3X) 
= T(Y,Z, \widetilde Q^2 f^2 X) 
-dF(\widetilde Q^2 f^2 X,Y,Z) \\
&\qquad 
 +2\,(\nabla^g F)(\widetilde Qf^2X,Y,Z) 
 +2\,(\nabla^g F)(P^{-1}\widetilde Qf^4X,Y,fZ) 
 -2\,(\nabla^g F)(P^{-1}\widetilde Qf^4X,fY,Z) , 
\end{align*}
which allow us to transform some terms in
$\delta_5(X,Y,Z)$ and then rewrite it as  
\begin{align*}
\notag
\delta_6(X,Y,Z) =&\,
-T(Y,Z,\ (12\,\widetilde Q+13\,\widetilde Q^2 +6\,\widetilde Q^3+\widetilde Q^4)X)
-3\,dF(\widetilde Q f^4X,Y,Z) \\
&-dF((3\,\widetilde Q+2\,\widetilde Q^2)X,fY,fZ) 
-2\,(\nabla^g F)(P^{-1}(2\,\widetilde Q+\widetilde Q^2)f^2X,Y,fZ) \\ 
 &+2\,(\nabla^g F)(P^{-1}(2\,\widetilde Q+\widetilde Q^2)f^2X,fY,Z) 
 -2\,(\nabla^g F)(\widetilde Qf^2X,Y,Z) .
\end{align*}
Note that $\delta_i(X,Y,Z)\ (1\le i\le 6)$ vanish when $\widetilde Q=0$.
From the equation \eqref{E-Q-fy-fz} with $\delta_4(X,Y,Z)$ replaced by $\delta_6(X,Y,Z)$, the equation \eqref{Eq-QT-solution} follows.
Using \eqref{E-3.3}, we can further simplify \eqref{Eq-QT-solution} to the form with $\nabla^g\,F$-terms and without $dF$-terms.
\end{proof}

\begin{example}\rm
Take 
almost Hermitian 
manifolds $(M_j, J_j,g_j)\ (j=1,\dots,m)$, where $J_j^2=-{I}_{\,j}$.
Consider the Riemannian product
$(M,g)=\bigl(M_1\times\cdots\times M_m,\; g_1\oplus\cdots\oplus g_m\bigr)$,
and define the $(1,1)$--tensor 
$f=\sqrt{\lambda_1}\,J_1\oplus\ldots\oplus\sqrt{\lambda_m}\,J_m$, where $\lambda_j\in\mathbb{R}_+$.
Then $(M,f,g)$ is a weak almost Hermitian manifold
with $f^2=-\bigoplus_{j}\lambda_j\,{I}_{j}$, and on $TM_j$ and for $X\in TM_j$ we~get
\begin{align*}
 fX=\sqrt{\lambda_j}\,J_jX,\quad 
f^2X=-\lambda_jX,\quad f^3X=-\lambda_j^{3/2}J_jX,\quad 
f^4X=\lambda_j^2X,
\quad f^6X = -\lambda_j^{3} X , \\
 \widetilde Q|_{\,TM_j}=(\lambda_j-1)\,{I}_j, \quad
P={I}-f^2=\bigoplus\nolimits_j(1+\lambda_j)\,{I}_j,
\quad
P^{-1}=\bigoplus\nolimits_j\frac{1}{1+\lambda_j}\,{I}_j.
\end{align*} 
Since the Levi-Civita connection of the Riemannian product preserves the splitting $TM=\bigoplus_{j=1}^m TM_j$, $f^2$ acts as a 
multiple of the identity on each factor.
The 2-form $F$ corresponding to $f$ is given by 
$F=F_1\oplus\ldots\oplus F_m$, 
where $F_j(X,Y)=g_j(X,\sqrt\lambda_j J_jY)$. As a consequence,
\[
 (\nabla^g_XF)(Y,Z)=0,
\]
whenever $X$ belongs to one factor and at least one of $Y,Z$ belongs to a
different factor.

If an Einstein connection $\nabla$ satisfies the $f^2$-torsion condition \eqref{E-Q-torsion}, then  
 $T(X,Y)=0$ for $X\in TM_i,\, Y\in TM_j\ (i\ne j)$.
Hence, the torsion tensor splits into the direct sum
 $T=T|_{TM_1}\oplus\ldots\oplus T|_{TM_m}$.
Substituting $\widetilde Q|_{\,TM_j} = (\lambda_j-1)\,{I}_j$ into \eqref{Eq-QT-solution}, then using
\[
 (\nabla^{g_j}_X F_j)(J_j Y, J_j Z)= -\,(\nabla^{g_j}_X F_j)(Y,Z),
 \quad
 (\nabla^{g_j}_X F_j)(J_j Y, Z)= (\nabla^{g_j}_X F_j)(Y, J_j Z),
\] 
we uniquely determine the torsion component $T|_{TM_j}$
by
\begin{align*}
T|_{\,TM_j}(Y,Z,X)
&=\lambda_j^{-2}(\lambda_j-1)(\nabla^{g_j}_X F_j)(Y,Z)
-\tfrac12\lambda_j^{-2}(\nabla^{g_j}_Y F_j)(Z,X)
-\tfrac12\lambda_j^{-2}(\nabla^{g_j}_Z F_j)(X,Y)\\
&\ 
+\lambda_j^{-1}(\nabla^{g_j}_{J_jX} F_j)(Y,J_jZ)
{-}\tfrac12\lambda_j^{-1}(\nabla^{g_j}_{J_jY} F_j)(J_jZ,X)
{-}\tfrac12\lambda_j^{-1}(\nabla^{g_j}_{J_jZ} F_j)(X,J_jY)\\
&\ 
-\tfrac14\,\lambda_j^{-2}(\lambda_j-1)(3\lambda_j+1)dF_j(X,Y,Z)
+\tfrac14(\lambda_j-1)\lambda_j^{-1/2}dF_j(J_jX,Y,J_jZ)\\
&\
+\tfrac14(\lambda_j-1)\lambda_j^{-1/2}dF_j(J_jX,J_jY,Z)
-\tfrac12\lambda_j^{-2}(\lambda_j-1)dF_j(X,J_jY,J_jZ) .
\end{align*}
If each $(M_j,J_j,g_j)$ is a K\"ahler manifold, then $\nabla^{g_j}F_j=0$ and $dF_j=0$. In this case, all torsion components vanish, and the Einstein connection reduces to the Levi-Civita connection.
Using \eqref{E-3.3}, we can further simplify the formula for $T|_{TM_j}\ (1\le j\le m)$ as
\begin{align*}
T|_{TM_j}(Y,Z,X) = &\ \frac{(\lambda_j-1)
(5-3\lambda_j)}{4\lambda_j^{2}}\,
(\nabla^{g_j}_X F_j)(Y,Z)
+\Bigl(\frac{1}{\lambda_j}+\frac{\lambda_j-1}{2\sqrt{\lambda_j}}\Bigr)\,
(\nabla^{g_j}_{J_jX} F_j)(Y,J_jZ) \\
&-\Bigl(\frac{3\lambda_j^{2}-2\lambda_j+1}{4\lambda_j^{2}}
+\frac{\lambda_j-1}{4\sqrt{\lambda_j}}\Bigr)\,
\big\{(\nabla^{g_j}_Z F_j)(X,Y)
+(\nabla^{g_j}_Y F_j)(Z,X) \big\}\\
&+\Bigl(
\frac{\lambda_j-1}{4\sqrt{\lambda_j}} 
-\frac{2\,\lambda_j-1}{2\lambda_j^{2}}\Bigr)\,
\big\{
 (\nabla^{g_j}_{J_jY} F_j)(J_jZ,X)
+(\nabla^{g_j}_{J_jZ} F_j)(X,J_jY)
\big\}.
\end{align*}
The formula above for $\lambda_j=1$ gives the solution \eqref{Eq-2.13}. 
\end{example}

For a nonsymmetric Riemannian space $(M,G=g+F)$,
the tensor $P={I}-f^2$ is positive definite, and hence, non-degenerate. 
Therefore, we have the following.

\begin{corollary}\label{C-5.2}
Let $\nabla$ be an Einstein connection of a weak almost Hermitian manifold $(M^{2n},f,g>0)$, considered as a nonsymmetric Riemannian space 
with $F(X,Y)=g(X,fY)$.  
If~the 
condition \eqref{E-Q-torsion} is valid,  
then the torsion $T$ of $\nabla$ is given by~\eqref{Eq-QT-solution}.
\end{corollary}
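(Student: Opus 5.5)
The corollary should follow from Theorem~\ref{T-8.2}. The only hypothesis of that theorem not assumed here is that $P={I}-f^2$ has rank $2n$, so the plan is to check that condition and then quote the theorem directly.

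\textbf{Step 1: $P$ is positive definite.} Since $g>0$ is Riemannian and $f$ is $g$-skew-symmetric, for any $X$ we have
\[
g(PX,X)=g(X,X)-g(f^2X,X)=g(X,X)+g(fX,fX).
\]
Here we used $g(f^2X,X)=-g(fX,fX)$. The right-hand side is $\ge g(X,X)>0$ for $X\ne0$. Hence $P$ is $g$-self-adjoint and positive definite.

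\textbf{Step 2: rank and smoothness of $P^{-1}$.} Positive definiteness makes $P$ injective at every point, so $P$ has rank $2n$ everywhere. The inverse $P^{-1}$ is then a smooth $(1,1)$-tensor, since it is obtained by pointwise matrix inversion of a smooth, everywhere-invertible field. This is needed because \eqref{Eq-QT-solution} contains terms with $P^{-1}$.

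Note that the non-degeneracy of $f$ built into the definition of a weak almost Hermitian manifold is not even needed for this step; positivity of $g$ alone suffices.

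\textbf{Step 3: apply the theorem.} With rank $P=2n$ established and the $f^2$-torsion condition \eqref{E-Q-torsion} assumed, all hypotheses of Theorem~\ref{T-8.2} hold. Its conclusion gives exactly \eqref{Eq-QT-solution} for the torsion $T$ of $\nabla$.

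The argument has no real obstacle; the only step with content is Step 1. It genuinely uses Riemannian signature: in pseudo-Riemannian signature $g(fX,fX)$ can be negative, and $P$ can then fail to be invertible. A remark could add that \eqref{Eq-QT-solution} determines $T$ completely when $(I+\tfrac12\widetilde Q)^2f^4$ is invertible, which holds because $f$ is non-degenerate and $I+\tfrac12\widetilde Q=\tfrac12 P$ is positive definite. The corollary itself, however, only claims the formula.
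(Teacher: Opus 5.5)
Your proposal is correct and matches the paper's argument. The paper likewise notes that for Riemannian $g$ the tensor $P=I-f^2$ is positive definite, which your identity $g(PX,X)=g(X,X)+g(fX,fX)$ makes explicit, hence $P$ has rank $2n$, and then applies Theorem~\ref{T-8.2} directly.
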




\begin{corollary}[see Theorems 1 and 2 of \cite{Prvanovic-95}]
Let $(M^{2n},J,g)$ be an almost Hermitian manifold considered as a nonsymmetric Riemannian manifold $(M,G=g+F)$, where $F(X,Y)=g(X,JY)$. 
Then the torsion of an Einstein connection $\nabla$ on $M$ is given by
\begin{align}\label{Eq-2.13}
\notag
 2\,T(Y,Z,X) 
 =&\ 2\,(\nabla^g_{JX} F)(JY,Z)
  - (\nabla^g_{JY} F)(JZ,X) -(\nabla^g_{JZ} F)(X,JY) \\
 &-(\nabla^g_Y F)(Z,X) -(\nabla^g_Z F)(X,Y) ;
\end{align}
in particular, the torsion of a special Einstein connection $\nabla$ on $M$, see \eqref{E-cond-2K}, is given~by  
\begin{equation*}
 2\,T(X,Y,Z) = (\nabla^g_XF)(Y,Z) 
 -(\nabla^g_{JZ}F)(JX,Y) -(\nabla^g_{JY}F)(JX,Z) .
\end{equation*}
\end{corollary}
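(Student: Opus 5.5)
The plan is to obtain the first formula \eqref{Eq-2.13} in two independent ways, and then treat the special case separately.

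\emph{First formula, route one.} Specialize Theorem~\ref{T-8.2} to $f=J$. Then $\widetilde Q=-J^2-I=0$, so the $f^2$-torsion condition \eqref{E-Q-torsion} holds automatically for every Einstein connection. This is why the corollary carries no hypothesis on $T$. Also $P=2I$ is nondegenerate, so Theorem~\ref{T-8.2} applies. With $\widetilde Q=0$, every $P^{-1}$, $\widetilde Q$ and $dF$ term in \eqref{Eq-QT-solution} drops out. The left side becomes $2T(Y,Z,X)$, since $f^4=I$. On the right, substitute $J^2=-I$ and $J^3=-J$. The first term $(\nabla^g_XF)((J+J^3)Y,JZ)$ vanishes, and the remaining terms become $\pm(\nabla^g F)$ expressions.

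\emph{First formula, route two (the one I would actually carry out).} Repeat the shorter argument in the proof of Theorem~\ref{thm72} with $\eta=0$, $\xi$ absent and $f=J$. Two inputs are needed.
\begin{itemize}
\item The almost Hermitian identities \eqref{Eq-2.4}, which follow from differentiating $J^2=-I$; this is \eqref{eq32b} and \eqref{eq32c} with $\eta=0$.
\item Formula \eqref{2.7^*}.
\end{itemize}
The steps are then:
\begin{enumerate}
\item Subtracting \eqref{2.7^*} at $(JY,JZ)$ from \eqref{2.7^*} at $(Y,Z)$ gives \eqref{Eq-acm1a}. Combined with \eqref{Eq-2.4}, this yields the reduced identity \eqref{Eq-2.9}.
\item Take the cyclic sum \eqref{Eq-2.10}, and replace $Y\to JY$, $Z\to JZ$ to get \eqref{Eq-2.11}.
\item Eliminate $T(JY,JZ,X)$ via \eqref{Eq-2.12}. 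This isolates $2T(Y,Z,X)$ and gives exactly \eqref{Eq-2.13}.
\end{enumerate}

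\emph{Special case.} Here I will use \eqref{E-cond-2K}, i.e.\ $T(Y,Z,JX)=-T(X,Z,JY)$, together with $J^2=-I$. This argument is literally the proof of Theorem~\ref{thm72-s} restricted to $\ker\eta=TM$.
\begin{enumerate}
\item Using these symmetries, rewrite \eqref{Eq-2.9} as $2(\nabla^g_XF)(Y,Z)=-T(X,Y,Z)+T(X,Z,Y)$.
\item Substitute $(X,Y,Z)\to(JZ,JX,Y)$ and $(X,Y,Z)\to(JY,JX,Z)$ in \eqref{Eq-2.9}.
\item Use \eqref{E-cond-2K} once more to cancel the terms $T(JX,JY,Z)+T(JX,JZ,Y)$.
\end{enumerate}
Subtracting then gives the stated expression for $2T(X,Y,Z)$.

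\emph{Expected obstacle: signs.} The main risk is bookkeeping of signs in the special-case formula. The paper's almost contact version \eqref{spect} carries $-(\nabla^g_XF)(Y,Z)$ and $+$ signs on the $J$-terms, while the displayed corollary has the opposite signs. I would therefore recheck the sign convention of \eqref{Eq-2.9} against \eqref{2.7^*} and the identities \eqref{Eq-2.4} before claiming agreement with Prvanovi\'c's Theorem~2. If a global sign discrepancy remains, it comes from the convention $F(X,Y)=g(X,JY)$ versus Prvanovi\'c's convention and must be reconciled there.
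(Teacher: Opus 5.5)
Your route two for \eqref{Eq-2.13} is correct. It is exactly the chain \eqref{Eq-2.9}--\eqref{Eq-2.12} that the paper runs, following Prvanovi\'c, in the proof of Theorem~\ref{thm72}, so the first formula is fine. Do not count route one as an independent check, though. If you specialize \eqref{Eq-QT-solution} as printed, then at $f=J$ the terms $(\nabla^g_{JX}F)(J^3Y,Z)-(\nabla^g_{JX}F)(J^2Y,JZ)$ cancel by \eqref{Eq-2.4}, and the term $2(\nabla^g_{JX}F)(JY,Z)$ of \eqref{Eq-2.13} disappears. The sign of $(\nabla^g_{fX}F)(f^3Y,Z)$ in \eqref{Eq-QT-solution} disagrees with \eqref{E-Q-fy-fz}. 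It is \eqref{E-Q-fy-fz} with $\delta_4=0$ that reduces to \eqref{Eq-2.13}.

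The genuine gap is in the special case. Your final claim, that subtracting gives the stated expression, is false, and no sign convention rescues it. Carrying out exactly your steps (\eqref{eqB} minus \eqref{eqC} minus \eqref{eqD}, then \eqref{eqE}, with $f=J$) yields
\[
2(\nabla^g_XF)(Y,Z)-2(\nabla^g_{JZ}F)(JX,Y)-2(\nabla^g_{JY}F)(JX,Z)=-2\,T(X,Y,Z),
\]
that is,
\[
T(X,Y,Z)=-(\nabla^g_XF)(Y,Z)+(\nabla^g_{JZ}F)(JX,Y)+(\nabla^g_{JY}F)(JX,Z).
\]
Consequences:
\begin{itemize}
\item The right-hand side of the displayed special formula equals $-T(X,Y,Z)$, not $2T(X,Y,Z)$.
\item The right-hand side of \eqref{spect} equals $T$, not $2T$. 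The last line of the proof of Theorem~\ref{thm72-s} drops a factor $2$. The same slip produces the $-\frac16\,dF$ in \eqref{spect2}, which contradicts $T=-\frac13\,dF$ from \eqref{ein8}.
\item The mismatch with the stated formula is a factor $-\frac12$, not a sign.
\end{itemize}
It cannot come from the convention for $F$ either. Changing the sign of $F$ (i.e.\ passing to the transposed $G$) replaces the Einstein connection by its transpose, whose torsion is $-T$. Every right-hand term is linear in $F$ and even in $J$, so it also changes sign, and the formula keeps its form.

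A concrete test: take a non-K\"ahler nearly K\"ahler manifold. Then $\Phi(X,Y,Z):=(\nabla^g_XF)(Y,Z)$ is a $3$-form with $(\nabla^g_{JX}F)(Y,Z)=\Phi(X,JY,Z)$. Formula \eqref{Eq-2.13} gives $T=-\Phi=-\frac13\,dF$ and $K=\frac12T$, so the Einstein connection is special. The displayed formula would instead give $T=\frac12\Phi$. So, in the paper's conventions, the ``in particular'' part cannot be proved as written. What your argument actually proves is the corrected formula above, and you should state that rather than defer to a reconciliation with Prvanovi\'c's conventions.
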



\begin{remark}
\rm
In view of \eqref{Eq-2.13}, the Einstein connection $\nabla$ on an almost Hermitian manifold is special, see definition (\ref{E-cond-2K}), if and only if the following identity holds:
\begin{equation}\label{s1}
(\nabla^g_X F)(Y,Z)+(\nabla^g_Y F)(X,Z)
=(\nabla^g_{JX}F)(JY,Z)+(\nabla^g_{JY}F)(JX,Z).
\end{equation} 
Among the sixteen Gray-Hervella classes \cite{gh-1980} of almost Hermitian manifolds,
the condition (\ref{s1}) of a special Einstein connection is satisfied for the following classes:
\[
{\mathcal W}_1,\ \ 
{\mathcal W}_3,\ \ 
{\mathcal W}_4,\ \
{\mathcal W}_3\oplus {\mathcal W}_4,\ \
{\mathcal W}_1\oplus {\mathcal W}_3,\ \
{\mathcal W}_1\oplus {\mathcal W}_4,\ \
{\mathcal W}_1\oplus {\mathcal W}_3\oplus {\mathcal W}_4;
\]
consequently, the Einstein connection on an almost Hermitian manifold
belonging to any of these classes is a special Einstein connection, see Theorem 3 of \cite{Prvanovic-95}.
\end{remark}


\begin{example}
\rm
If~$(\nabla^g_X J)X=0\ (X\in \mathfrak{X}_M)$, 
or $\nabla^g J=0$,
then a weak almost Hermitian manifold $(M^{2n}, J, g)$ is 
a 
\textit{weak nearly K\"{a}hler~manifold},
or a \textit{weak K\"{a}hler~manifold}, respectively,~\cite{rov-survey24}.
Let $(M, f, g)$ be a \textit{weak nearly K\"{a}hler manifold}, considered as a nonsymmetric Riemannian manifold $(M, G = g + F)$ with $F(X,Y)=g(X,fY)$.
If $\nabla$ is an Einstein connection on $M$ with totally skew-symmetric torsion $T$, then the 
$f$-\textit{torsion condition}~holds:
\begin{align}\label{E-f-torsion} 
 T(fX, Y) = T(X,fY) = -f\,T(X,Y),
\end{align}
and the torsion of $\nabla$ is determined by
 $T(X,Y,Z)=-\tfrac13\,dF(X,Y,Z)$,
see Example 3.4 in~\cite{rst-63}. 
In~this case, \eqref{newnbl-K} reduces to
$K_XY=\tfrac12\,T(X,Y)$, and 
 $\nabla_X Y=\nabla^g_X Y + \tfrac12\,T(X,Y)$ holds.
\end{example}

\section{The terms $\delta_i(X,Y,Z)$}
\label{sec:app}

Here, we present $\delta_i(X,Y,Z)\ (1\le i\le 5)$, see Section~\ref{sec:w-Herm}, verified with the help of MAPLE.
\begin{align*}
&\delta_1(X,Y,Z)
= -T(fZ,X,\widetilde QfY) -T(X,\widetilde QY,Z) 
-T(X,\widetilde QfY,fZ) +T(\widetilde QfY,Z,fX)\\
& -T(Z,X,\widetilde QY) -T(X,Y,\widetilde QZ) 
+ T(\widetilde QY,fZ,fX) - T(fX,Y,\widetilde QfZ)
-T(Y,\widetilde QZ,X) \\
& -T(Y,\widetilde QfZ,fX) - T(Y,Z,\widetilde QX) + T(\widetilde QfZ,X, fY) 
- T(Z,\widetilde QX,Y) + T(\widetilde QfX,Y,fZ) \\
& - T(fY,Z,\widetilde QfX) - T(Z,\widetilde QfX,fY)
  + T(\widetilde QZ,fX,fY) + T(\widetilde QX,fY,fZ) .
\end{align*} 
\begin{align*} 
 \delta_2(X,Y,Z) =
 &\ T(X,\widetilde QY,Z) -T(\widetilde QY,fZ,fX) 
 -2\,T(\widetilde QfY,Z,fX) +T(Z,X,\widetilde QY) \\
 &+2\,T(\widetilde QZ,X,Y) {+}T(\widetilde QZ,X,\widetilde QY) {+}T(X,Y,\widetilde QZ) {+}2\,T(fY,\widetilde QZ,fX) \\
 &-T(\widetilde QfX,fY,Z) -T(\widetilde QfX,fY,\widetilde QZ) +T(fX,fY,\widetilde QZ) \\
&+T(fX,fY,\widetilde Q^2Z) +T(fY,\widetilde Q^2Z,fX) +T(Y,fZ,\widetilde QfX) \\
&+T(\widetilde QY,fZ,\widetilde QfX) -T(\widetilde Q^2Y,fZ,fX) +T(fZ,\widetilde QfX,Y) \\
&+T(fZ,\widetilde QfX,\widetilde QY) 
-T(\widetilde QfY,\widetilde QZ,fX) -T(\widetilde QfZ,fX,Y) \\
&-T(\widetilde QfZ,fX,\widetilde QY) -T(X,fY,\widetilde QfZ)
-T(fY,\widetilde QfZ,X) \\
&-T(fY,fZ,\widetilde QX) +T(\widetilde Q^2Z,X,Y) +T(\widetilde Q^2Z,X,\widetilde QY) \\
&-T(X,\widetilde Q^2Y,Z) -T(X,\widetilde Q^2Y,\widetilde QZ) 
+T(\widetilde QX,\widetilde QY,Z) \\
&+T(\widetilde QX,\widetilde QY,\widetilde QZ) -T(Z,X,\widetilde Q^2Y) -T(\widetilde QZ,X,\widetilde Q^2Y) \\
&+T(\widetilde QX,Y,Z) +T(\widetilde QX,Y,\widetilde QZ) -T(fZ,\widetilde QX,fY) ,
\end{align*} 
\begin{align*}
\delta_3(X,Y,Z)
=&\ T(fZ,\widetilde QfX,\widetilde QY)
{-}T(\widetilde QfZ,fX,\widetilde QY) {-} T(\widetilde QfZ,fX,Y) 
{-} T(\widetilde QfY,\widetilde QZ,fX) \\
&+T(fZ,\widetilde QfX,Y) {-}T(\widetilde Q^2Y,fZ,fX) 
{+}T(\widetilde QY,fZ,\widetilde QfX) {+} T(Y,fZ,\widetilde QfX) \\
&+T(fY,\widetilde Q^2Z,fX) {+} T(fX,fY,\widetilde Q^2Z) 
{-} T(\widetilde QfX,fY,\widetilde QZ) {-} T(\widetilde QfX,fY,Z) \\
&-T(\widetilde QZ,X,\widetilde Q^2Y) - T(Z,X,\widetilde Q^2Y) 
- 3\,T(\widetilde QY,Z,\widetilde QX) + T(\widetilde QX,\widetilde QY,\widetilde QZ) \\
&-T(fX,\widetilde QY,fZ) - 3\,T(\widetilde QY,Z,X) - T(fX,\widetilde QfY,\widetilde QZ) - T(\widetilde QZ,fX,\widetilde QfY) \\
&+2\,T(\widetilde QfY, fZ,\widetilde QX) {+} 2\,T(\widetilde QfY,fZ,X) {-}T(fX,\widetilde Q^2Y,fZ) {-}T(fZ,fX,\widetilde QY) \\
&-T(fZ,fX,\widetilde Q^2Y) - T(\widetilde Q^2Y,Z,X) - T(\widetilde Q^2Y,Z,\widetilde QX) - T(\widetilde QY,\widetilde QZ,X) \\
&-T(\widetilde QY,\widetilde QZ,\widetilde QX) - T(Y,\widetilde QZ,\widetilde QX) + T(\widetilde QX,\widetilde QY,Z) - T(X,\widetilde Q^2Y,\widetilde QZ) \\
&-T(X,\widetilde Q^2Y,Z) + T(\widetilde Q^2Z,X,\widetilde QY) 
+ T(\widetilde Q^2Z,X,Y) + T(fY,fZ,\widetilde QX)  \\
&- T(fY,\widetilde QfZ,X) - T(X,fY,\widetilde QfZ) - T(fZ,\widetilde QX,fY) + T(\widetilde QX,Y,\widetilde QZ) \\
&+ T(\widetilde QX,Y,Z) -T(\widetilde QY,fZ,fX) + T(X,Y,\widetilde QZ) + T(\widetilde QZ,X,\widetilde QY)  \\
&+ T(Z,X,\widetilde QY) +T(X,\widetilde QY,Z) + 2\,T(fY,\widetilde QZ,fX) + 2\,T(\widetilde QZ,X,Y) \\
&- 2\,T(\widetilde QfY,Z,fX) - T(\widetilde QZ,fX,fY) 
- 2\,T(Y,Z,\widetilde QX) - T(Y,\widetilde QZ,X) ,
\end{align*}
\begin{align*}
\delta_4(X,Y,Z) 
=&\ 2\,T(\widetilde QZ, X, Y) -T(fZ, X, \widetilde QfY) 
-T(X, \widetilde QfY, fZ) -T(\widetilde QfY, Z, fX) \\
&+T(\widetilde QZ, X, \widetilde QY) +2\,T(fY, \widetilde QZ, fX) 
-T(fX, Y, \widetilde QfZ) -2\,T(Y, \widetilde QZ, X) \\
&-T(Y, \widetilde QfZ, fX) +T(\widetilde QX, Y, Z) +T(\widetilde QX, Y, \widetilde QZ) -3\,T(Y, Z, \widetilde QX) \\
&-T(Y, \widetilde QZ, \widetilde QX) +T(\widetilde QfZ, X, fY) -T(fZ, \widetilde QX, fY) -T(Z, \widetilde QX, Y) \\
&+T(\widetilde QfX, Y, fZ) -T(fY, Z, \widetilde QfX) -T(Z, \widetilde QfX, fY) -3\,T(\widetilde QY, Z, X) \\
&-3\,T(\widetilde QY, Z, \widetilde QX) -T(fX, \widetilde QY, fZ) +T(\widetilde QX, fY, fZ) -T(X, fY, \widetilde QfZ)  \\
&-T(fY, \widetilde QfZ, X) +T(fY, fZ, \widetilde QX) +T(\widetilde Q^2Z, X, Y) +T(\widetilde Q^2Z, X, \widetilde QY) \\
&-T(X, \widetilde Q^2Y, Z)-T(X, \widetilde Q^2Y, \widetilde QZ) +T(\widetilde QX, \widetilde QY, Z) +T(\widetilde QX, \widetilde QY, \widetilde QZ) \\
&-T(Z, X, \widetilde Q^2Y) -T(\widetilde QZ, X, \widetilde Q^2Y) -T(\widetilde QfX, fY, Z) -T(\widetilde QfX, fY, \widetilde QZ) \\ &+T(fX, fY, \widetilde Q^2Z) {+}T(fY, \widetilde Q^2Z, fX) 
{+}T(Y, fZ, \widetilde QfX) {+}T(\widetilde QY, fZ, \widetilde QfX) \\
&-T(\widetilde Q^2Y, fZ, fX) {+}T(fZ, \widetilde QfX, Y) 
{+}T(fZ,\widetilde QfX, \widetilde QY) {-}T(fX,\widetilde Q^2Y, fZ) \\ 
&-T(fZ, fX, \widetilde QY) -T(fZ, fX, \widetilde Q^2Y) -T(\widetilde Q^2Y, Z, X) -T(\widetilde Q^2Y, Z, \widetilde QX) \\
&-T(\widetilde QY, \widetilde QZ, X) -T(\widetilde QY, \widetilde QZ, \widetilde QX) +2\,T(\widetilde QfY, fZ, X) \\
&+2\,T(\widetilde QfY, fZ, \widetilde QX) 
-T(\widetilde QfY, \widetilde QZ, fX) -T(\widetilde QfZ, fX, Y) \\
&-T(\widetilde QfZ, fX, \widetilde QY) 
-T(\widetilde QZ, fX, \widetilde QfY) 
-T(fX, \widetilde QfY, \widetilde QZ) , 
\end{align*}
\begin{align*}
\notag
\delta_5(X,Y,Z)  
\overset{\eqref{E-tildeQ-torsion}}=&\, 
-T(fZ, X, \widetilde QfY) +T(Z, X, \widetilde QY) +T(X, Y, \widetilde QZ) -8\,T(Y, Z, \widetilde QX) \\
&-T(fX, Y, \widetilde QfZ) -T(X, fY, \widetilde QfZ) 
+T(Z, X, \widetilde Q^2Y) -T(fX, fY, \widetilde QZ) \\
&-T(fX, fY, \widetilde Q^2Z) {+}2\,T(fY, fZ, \widetilde QX) 
{-}T(fZ, fX, \widetilde QY) {-}T(fZ, fX, \widetilde Q^2Y) \\
&+2\,T(fY, fZ, \widetilde Q^2X) {-}6\,T(Y, Z, \widetilde Q^2X) {-}2\,T(Y, Z, \widetilde Q^3X) {-}T(Z, fX, \widetilde Q^2fY) \\
&+T(X, Y, \widetilde Q^2Z) -T(fX, Y, \widetilde Q^2fZ) -T(Z, fX, \widetilde QfY) \\
\overset{\eqref{ein8}}=&\, 
-T(Y,Z, (9\,\widetilde Q+7\,\widetilde Q^2+2\,\widetilde Q^3)X) 
+T(fY,fZ,(3\,\widetilde Q+2\,\widetilde Q^2)X)\\
&-T(Y,fZ,\widetilde Qf^3X) -T(fY,Z,\widetilde Qf^3X) 
-dF(\widetilde Qf^3X,Y,fZ)\\
&-dF(\widetilde Qf^3X,fY,Z) 
 +dF(\widetilde QX,fY,fZ) 
 +dF(\widetilde Qf^2X,Y,Z) .
\end{align*}

\section*{Conclusion}

We presented explicitly the Einstein connection for nonsymmetric pseudo-Rieman\-nian, in particular, for weak almost Hermitian manifolds, satisfying the $f^2$-torsion condition.
We~also obtained the corresponding formulas for almost contact metric mani\-folds, assuming the $f^2$-torsion condition.
We expressed the torsion in terms of $\nabla^g F$ and $dF$, and the additional $\widetilde Q$-terms, explained precisely how the weak almost Hermitian case differs from the almost Hermitian case.
The~presented identities provide a new tool for constructing examples and for studying classes of nonsymmetric pseudo-Rieman\-nian manifolds, in particular, weak almost Hermitian mani\-folds
with the usual Gray-Hervella classification, in Einstein's nonsymmetric gravitational~theory.
In~further research we will study weak (para-) metric $f$-manifolds, in particular, weak (para-) contact metric manifolds, with an Einstein connections or connections with totally skew-symmetric torsion, 
satisfying the $f^2$-torsion condition.

\bigskip

\noindent{\bf Acknowledgments.} This work was 
supported for Prof. Milan Zlatanovi\' c
by the Ministry of Education, Science and Technological Development of the Republic of Serbia (contract reg. no. 451-03-34/2026-03/200124). 
The authors would also like to thank Dr. Miroslav Maksimovi\'c for carefully reading the paper and for his helpful and valuable comments.

\end{document}